\documentclass[11pt,reqno]{article}
\usepackage{amssymb, amsmath, amsthm, amsfonts, amscd, epsfig, subfig}
\usepackage[dvipsnames]{xcolor}
\usepackage[utf8]{inputenc}
\usepackage[english]{babel}
\usepackage{bm}
\usepackage{bbm}
\usepackage{graphicx, epsfig}
\usepackage{geometry,graphicx,pgfplots}

\usepackage[export]{adjustbox}
\usepackage[titletoc,toc,title]{appendix}

\usepackage{algorithm}
\usepackage{mathtools}

\usepackage{afterpage}
\usepackage{comment}

\tolerance=9999
\emergencystretch=10pt
\hyphenpenalty=10000


\newtheorem{theorem}{Theorem}[section]
\newtheorem{cor}[theorem]{Corollary}
\newtheorem{lemma}[theorem]{Lemma}                                                                                                                                                                                                                                                                             

\newtheorem{definition}{Definition}

\newtheorem{example}{Example}

\setlength{\textwidth}{160mm} \setlength{\textheight}{220mm}
\setlength{\oddsidemargin}{0mm} \setlength{\evensidemargin}{0mm} \setlength{\topmargin}{-10mm}

\def\N{\mathbb{N}}

\newcommand{\Om}{\Omega}

\newcommand{\RR}{\mathbb{R}}


\newcommand{\p}{\partial}

\newcommand{\pd}[2]{\frac {\p #1}{\p #2}}
\newcommand{\ds}{\displaystyle}
\newcommand{\eqnref}[1]{(\ref {#1})}

\renewcommand{\qed}{\hfill $\Box$ \medskip}

\newcommand{\beq}{\begin{equation}}
\newcommand{\eeq}{\end{equation}}
\newcommand{\RN}[1]{
  \textup{\uppercase\expandafter{\romannumeral#1}}
}


\numberwithin{equation}{section}
\numberwithin{figure}{section}

\begin{document}
\title{Approximation of the first Steklov--Dirichlet eigenvalue on eccentric spherical shells in general dimensions \
}

\date{}

\author{
Jiho Hong\thanks{Department of Mathematics, The Chinese University of Hong Kong, Shatin, N.T., Hong Kong, P.R. China (jihohong@cuhk.edu.hk).}\and
Woojoo Lee\footnotemark[2]\thanks{Department of Mathematical Sciences, Korea Advanced Institute of Science and Technology, 291 Daehak-ro, Yuseong-gu, Daejeon 34141, Republic of Korea (woojoo.lee@kaist.ac.kr, mklim@kaist.ac.kr).}\and
Mikyoung Lim\footnotemark[2]
}

\maketitle

\begin{abstract}
We study the first Steklov--Dirichlet eigenvalue on eccentric spherical shells in $\RR^{n+2}$ with $n\geq 1$, imposing the Steklov condition on the outer boundary sphere, denoted by $\Gamma_S$, and the Dirichlet condition on the inner boundary sphere. The first eigenfunction admits a Fourier--Gegenbauer series expansion via the bispherical coordinates, where the Dirichlet-to-Neumann operator on $\Gamma_S$ can be recursively expressed in terms of the expansion coefficients \cite{Hong:2023:SDE:preprint}. In this paper, we develop a finite section approach for the Dirichlet-to-Neumann operator to approximate the first Steklov--Dirichlet eigenvalue on eccentric spherical shells.
We prove the exponential convergence of this approach by using the variational characterization of the first eigenvalue.
Furthermore, based on the convergence result, we propose a numerical computation scheme as an extension of the two-dimensional result in \cite{Hong:2022:FSD} to general dimensions. We provide numerical examples of the first Steklov--Dirichlet eigenvalue on eccentric spherical shells with various geometric configurations.

\end{abstract}

\noindent {\footnotesize {\emph{2020 Mathematics Subject Classification}.}  35P15, 49R05, 65D99.}

\noindent {\footnotesize {\bf Key words.} 
Steklov--Dirichlet eigenvalue; Eccentric spherical shells; Eigenvalue computation;
Bispherical coordinates; Finite section method}

\section{Introduction}
We consider the Steklov--Dirichlet eigenvalue problem for a smooth domain $\Om\subset\RR^d$ with two boundary components $\Gamma_D$ and $\Gamma_S$:
\begin{align} \label{eqn:Steklov-Dirichlet}
\begin{cases} 
    \ds \Delta u = 0   &\ds\text{in   } \Omega,\\
  \ds   u = 0 & \ds\text{on   }  \Gamma_D,\\
  \ds   \frac{\partial u}{\partial n} = \sigma u &\ds\text{on   } \Gamma_S
\end{cases}
\end{align}
with the unit outward normal vector $n$ to $\p\Om$. 
A real constant $\sigma$ is called a Steklov--Dirichlet eigenvalue if there exists a non-trivial solution $u$, the corresponding eigenfunction, to \eqnref{eqn:Steklov-Dirichlet}. 
For the instance $\Gamma_D=\emptyset$, the eigenvalue problem \eqnref{eqn:Steklov-Dirichlet} degenerates to the classical Steklov eigenvalue problem, for which we refer to \cite{stekloff:1902:FPM,Girouard:2017:SGS,Colbois:2022:SRD}. 
Assuming $\Gamma_D\neq\emptyset$, \eqnref{eqn:Steklov-Dirichlet} admits discrete eigenvalues (see \cite{Agranovich:2006:MPS}), namely,  
$$ 0<\sigma_1(\Omega)\le \sigma_2(\Omega) \le \cdots \rightarrow \infty.$$ 
The first Steklov--Dirichlet eigenvalue admits the variational characterization \cite{Bandle:1980:IIA}:
\begin{align} \label{variational characterization}
\sigma_1(\Om)=\inf_{v\in H^1_\diamond(\Om)\setminus\{0\}}\frac{\|\nabla v\|_{L^2(\Om)}^2}{\|v\|_{L^2(\Gamma_S)}^2}
\end{align}
with $H^1_\diamond(\Om):=\{v\in H^1(\Om)\,:\, v=0\mbox{ on }\Gamma_D\}$. 
In addition, the following variational characterization holds (see, for example, \cite[Eqn. (2.7)]{Colbois:2024:SRD}):
\begin{align}\label{vari:second}
\sigma_2(\Omega) = \inf_{E\in\mathcal{E}} \sup_{v\in E\setminus\{0\}} \frac{\|\nabla v\|_{L^2(\Om)}^2}{\|v\|_{L^2(\Gamma_S)}^2},
\end{align}
where $\mathcal{E}$ is the set of all two dimensional subspaces of $H^1_\diamond(\Om)$.
The Steklov--Dirichlet eigenvalue problem \eqnref{eqn:Steklov-Dirichlet} is equivalent to the eigenvalue problem of the  Dirichlet-to-Neumann operator $\mathcal{L}$ defined by 
\begin{align}\label{def:oper:L}
\mathcal{L} :    \hat{u} &\mapsto \frac{\partial {u}}{\partial \nu}\Big|_{\Gamma_S}\quad\mbox{on }C^\infty(\Gamma_S)
\end{align}
with the solution $u$ to the problem
\begin{align*}
\begin{cases}
\ds \Delta {u} = 0&   \text{in   } \Omega,  \\
\ds {u} = 0& \text{on   } \Gamma_D,\\
\ds {u} = \hat{u} &\text{on   } \Gamma_S.
\end{cases}
\end{align*}
The operator $\mathcal{L}$ is positive-definite and self-adjoint with respect to the $L^2$ inner product \cite{Agranovich:2006:MPS}. 

The Steklov--Dirichlet eigenvalue problems are related to various other problems. For instance, the vibration modes of a partially free membrane, fixed along the inner boundary with no mass on the interior, can be described by Steklov--Dirichlet eigenfunctions \cite{Hersch:1968:EPI}. The eigenvalue problem shares connections with the Laplace eigenvalue problems \cite{Arrieta:2008:FRL, Lamberti:2015:VSE} and the stationary heat distribution \cite{Banuelos:2010:EIM, Kuznetsov:2014:LVA}. 
In addition, the Steklov--Neumann eigenvalue problem, which is the problem \eqref{eqn:Steklov-Dirichlet} with the zero Neumann condition instead of the zero Dirichlet condition on $\Gamma_D$, has relevance to the sloshing problem in hydrodynamics \cite{Kulczycki:2009:HST}. An optimization approach for the Steklov--Neumann eigenvalues was studied in \cite{Ammari:2020:OSN}. We refer to \cite{Banuelos:2010:EIM}  for a comparison of the Steklov--Dirichlet and Steklov--Neumann eigenvalues.

The geometric dependence of the first Steklov--Dirichlet eigenvalue has been intensively studied. 
 In 1968, Hersch and Payne obtained bounds on the first eigenvalue on bounded doubly connected domains in $\mathbb{R}^2$ \cite{Hersch:1968:EPI}. For planar domains, Dittmar derived isoperimetric inequalities  \cite{Dittmar:1998:IIS}, and Dittmar and Solynin obtained a lower bound for doubly connected domains \cite{Dittmar:2003:MSE, Dittmar:2005:EPC}.  See also \cite{Paoli:2021:SRS, Micetti:2022:SDS} for  spectral stability and \cite{Hassannezhad:2020:EBM} for the Riesz mean estimates of the mixed Steklov eigenvalues.

For the instance in which $\Om$ is an eccentric spherical shell, which is the main subject of this paper, much attention has been attracted to establishing the behavior of the first Steklov--Dirichlet eigenvalue depending on the distance $t$ between the two centers of the boundary spheres of the shell (see Figure \ref{fig:geometry}). For simplicity, we denote by $\sigma_1^t$ the first Steklov--Dirichlet eigenvalue on the eccentric shell. Santhanam and Verma proved that $\sigma_1^t$ attains the maximum at $t=0$ in $(n+2)$-dimensions with $n\geq 1$ \cite{Verma:2020:EPL}, and Seo and Ftouhi independently showed the maximality in $\RR^2$ \cite{Seo:2021:SOP, Ftouhi:2022:WPS};  this maximality result was generalized to two-point homogeneous spaces \cite{Seo:2021:SOP} and general domains in Euclidean spaces \cite{Gavitone:2022:IIF}. Hong, Lim and Seo verified differentiability for $\sigma_1^t$ with respect to $t$ and obtained its shape derivative \cite{Hong:2022:FSD}.  Also, the shape derivative and the dependence of the first eigenvalue on $t$ have been investigated for other Laplacian eigenvalues problem. We refer the reader to \cite{Ramm:1998:IME, Chorwadwala:2013:TFL, Anoop:2018:SMF, Rane:2019:FDE} for the Dirichlet Laplacian problems, to \cite{Chorwadwala:2015:EOP, Anoop:2018:SMF} for the Dirichlet $p$-Laplacian problems, to \cite{Djitte:2021:FHF,Djitte:2023:NSF} for the Dirichlet fractional Laplacian problems, and to \cite{Anoop:2021:SVR} for the Zaremba problem.

In this paper, we present an approximation method for $\sigma_1^t$ by generalizing the result in two dimensions \cite{Hong:2022:FSD} to arbitrary higher-order dimensions. 
It is noteworthy that the convergence of the approximation of $\sigma_1^t$ and the corresponding eigenfunction is established here (see Theorem \ref{prop:computation:validation:eq} and Theorem \ref{thm:eigenfunction}), whereas they were not in \cite{Hong:2022:FSD}.
To describe the result in detail we specify the geometric configuration. 
Let $\Om$ be an eccentric spherical shell in $\RR^{n+2}$, $n\geq 1$, where 
the zero Dirichlet condition and the robin boundary condition are assigned on the inner and outer boundaries of $\Om$, respectively. In other words, we consider the eigenvalue problem \eqnref{eqn:Steklov-Dirichlet} for the domain
\beq\label{spheri_shell}
\Om=B_2\setminus\overline{B_1^t}\quad\mbox{with}\quad \Gamma_D=\p B_1^t,\ \Gamma_S=\p B_2,
\eeq
where $B_1^t$, $B_2$ are balls satisfying $\overline{B_1^t}\subset B_2$ and $t$ is the distance between the centers of the inner and outer boundary spheres of $\Om$ (see Figure \ref{fig:geometry}). We denote by $\sigma_1^t$ the first 
Steklov--Dirichlet eigenvalue as above and by $u_1^t$ the corresponding eigenfunction. 
The exact value for the concentric case (i.e., $t=0$) is well known as
\begin{align}\label{sigma1:0}
\sigma_1^0 = \frac{nr_1^{n}}{r_2\big({r_2}^{n}- r_1^{n}\big)},
\end{align}
which is the maximal value of $\sigma_1^t$ over $t$ \cite{Verma:2020:EPL}.

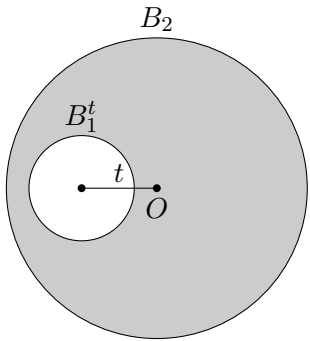
\begin{figure}[h]
    \centering
        \begin{tikzpicture}
\coordinate  (C) at (0,0);
\coordinate (D) at (2,0);
\coordinate (X) at (-1,0);
\fill[gray!40,even odd rule] (X) circle (0.7) (C) circle (2);
\draw (C) circle(2cm);
\draw (X) circle (0.7cm);

\draw (X) --(C);
\node at (-0.5,0.2) {$t$};

\node at (-1,0.95){$B_1^t$};
\node at (0,2.25){$B_2$};
\node[below] at (0,0){$O$};

\foreach \point in {X,C}
	\fill [black] (\point) circle (1.5pt);

	\end{tikzpicture}
    \caption{An eccentric spherical shell $\Om=B_2\setminus\overline{B_1^t}$ with the distance $t$ between the centers of  the two boundary spheres.}  \label{fig:geometry}
\end{figure}

 For the spherical shell $\Om=B_2\setminus\overline{B_1^t}$ in $\RR^d$ with $d\geq 2$,  the first eigenvalue $\sigma_1^t$ is simple, i.e., 
 \beq\label{first:simple}
 \sigma_1^t<\sigma_2^t,
 \eeq
 and the corresponding eigenfunction $u_1^t$ does not change the sign in $\Om$ \cite{Hong:2022:FSD}.
Assuming $d=n+2$ with $n\geq 1$ and appropriately rotating and translating $\Om$, one can express the boundary values of the first eigenfuction as
\beq\label{eq:final series expansion of u_1^t}
\begin{aligned}
\ds u_1^t\Big|_{\p B_2}&=(\cosh\xi_2-\cos\theta)^{\frac{n}{2}}\,\sum_{m=0}^\infty \widetilde{C}_m \,G_m^{\left(n/2\right)}(\cos\theta),\\[1mm]
\ds\frac{\partial u_1^t}{\partial {n}}\Big|_{\partial B_2}
&=-\frac{(\cosh \xi_2-\cos \theta)^{\frac{n}{2}}}{\alpha}
 \sum_{m=0}^\infty \Big(\frac{n\sinh\xi_2}{2}\widetilde{C}_m-\cosh\xi_2\big(m+\frac{n}{2}\big)c_m^2\widetilde{C}_m\\
& \hskip 4cm+\frac{m}{2}c_{m-1}^2\tilde{C}_{m-1}
+\frac{m+n}{2}c_{m+1}^2\widetilde{C}_{m+1}\Big)G_m^{\left(n/2\right)}(\cos \theta)
\end{aligned}
\eeq
with some constant coefficients $\widetilde{C}_m$ \cite{Hong:2023:SDE:preprint}. Here, $(\xi,\theta,\varphi_1,\dots, \varphi_{n})$ is the bispherical coordinate system and $G_m^{\left(n/2\right)}$ indicates the Gegenbauer polynomials (see section \ref{sec:sec2} for details).  We remind the reader that the eigenvalue value problem for $\mathcal{L}$ in \eqnref{def:oper:L} is equivalent to the Steklov--Dirichlet eigenvalue problem.
The Steklov condition on $\Gamma_S(=\p B_2)$ leads to the recursive relations for the coefficients $\widetilde{C}_m$. 
Using this recursive relation, an asymptotic lower bound is obtained \cite{Hong:2023:SDE:preprint} (see also \cite{Hong:2022:FSD} for the results in $\RR^2$):
\beq\label{ineq:sdeig:lowerbound}
\liminf_{t\to (r_2-r_1)^-}\sigma_1^t\ge\frac{(n+1)r_1-nr_2}{2r_2(r_2-r_1)}.
\eeq

In the present paper, we apply the finite section method (see, for instance, \cite{Bottcher:2000:CNA,Bottcher:2001:AAN}) and represent the operator $\mathcal{L}$ by a symmetric tridiagonal matrix, similar to the approach in  \cite{Hong:2022:FSD}. 
We take the finite section operator $Q_N\mathcal{L}Q_N$ for the  Dirichlet-to-Neumann operator $\mathcal{L}$ in  \eqnref{def:oper:L} with an orthogonal projection $Q_N$, where $Q_N\mathcal{L}Q_N$ is identical to a finite dimensional matrix we name $\mathbb{L}_N$. We denote by $\sigma_{1,N}^t$ the smallest eigenvalue of $\mathbb{L}_N$ and define $u_{1,N}^t$ using the first eigenvectors of $\mathbb{L}_N$. Our main theorems are the following. We provide the proofs in subsection \ref{subsec:proof:maintheorem}. 

\begin{theorem}\label{prop:computation:validation:eq}
Let $m\in\mathbb{N}$ and $u_{1,m}^t$ be given by  Definition \ref{def:trun:u} in subsection \ref{subsec:finitesection}. We have
\beq\label{bounds:stronger}\lim_{N\to\infty}\sigma_{1,N}^t= \sigma_1^t
\le \frac{\|\nabla u_{1,m}^t \|_{L^2(\Om)}^2}{\|u_{1,m}^t \|_{L^2(\Gamma_S)}^2}\le \sigma_{1,m}^t.\eeq
\end{theorem}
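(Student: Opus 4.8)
The plan is to split the chain \eqnref{bounds:stronger} into the limit assertion $\lim_{N\to\infty}\sigma_{1,N}^t=\sigma_1^t$ and the two sandwich inequalities, and to concentrate the work on the limit; the sandwich will be bookkeeping. Write $R_m:=\|\nabla u_{1,m}^t\|_{L^2(\Om)}^2/\|u_{1,m}^t\|_{L^2(\Gamma_S)}^2$. I will use throughout that \eqnref{eqn:Steklov-Dirichlet} is equivalent to the eigenvalue problem for the self-adjoint positive operator $\mathcal{L}$ of \eqnref{def:oper:L}, which has discrete spectrum, so $\sigma_1^t=\min\operatorname{spec}(\mathcal{L})=\inf_g \langle\mathcal{L}g,g\rangle_{L^2(\Gamma_S)}/\|g\|_{L^2(\Gamma_S)}^2$ over the form domain of $\mathcal{L}$, and that this infimum agrees with the one in \eqnref{variational characterization} via harmonic extension. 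Here $Q_N$ is the $L^2(\Gamma_S)$-orthogonal projection onto $\operatorname{span}\{f_0,\dots,f_{N-1}\}$, with $f_k:=(\cosh\xi_2-\cos\theta)^{n/2}G_k^{(n/2)}(\cos\theta)$ the traces in \eqnref{eq:final series expansion of u_1^t}; since $\mathbb{L}_N$ represents $Q_N\mathcal{L}Q_N$ restricted to $\operatorname{ran}Q_N$ and $\langle Q_N\mathcal{L}Q_N g,g\rangle_{L^2(\Gamma_S)}=\langle\mathcal{L}g,g\rangle_{L^2(\Gamma_S)}$ for $g\in\operatorname{ran}Q_N$, Rayleigh--Ritz gives $\sigma_{1,N}^t=\min_{0\ne g\in\operatorname{ran}Q_N}\langle\mathcal{L}g,g\rangle_{L^2(\Gamma_S)}/\|g\|_{L^2(\Gamma_S)}^2$.

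I will dispatch the sandwich first. By Definition \ref{def:trun:u}, $u_{1,m}^t$ is the harmonic function on $\Om$ with $u_{1,m}^t=0$ on $\Gamma_D$ and $u_{1,m}^t|_{\Gamma_S}$ a first eigenvector of $\mathbb{L}_m$, so $u_{1,m}^t\in H^1_\diamond(\Om)\setminus\{0\}$ and $\sigma_1^t\le R_m$ is immediate from \eqnref{variational characterization}. For the upper bound, Green's identity with $\Delta u_{1,m}^t=0$, $u_{1,m}^t|_{\Gamma_D}=0$, and $\partial u_{1,m}^t/\partial n=\mathcal{L}\hat u_{1,m}^t$ on $\Gamma_S$ (with $\hat u_{1,m}^t:=u_{1,m}^t|_{\Gamma_S}$) yields $\|\nabla u_{1,m}^t\|_{L^2(\Om)}^2=\langle\mathcal{L}\hat u_{1,m}^t,\hat u_{1,m}^t\rangle_{L^2(\Gamma_S)}$; since $\hat u_{1,m}^t\in\operatorname{ran}Q_m$ is an eigenvector of $Q_m\mathcal{L}Q_m$ at $\sigma_{1,m}^t$, the right-hand side equals $\sigma_{1,m}^t\|\hat u_{1,m}^t\|_{L^2(\Gamma_S)}^2=\sigma_{1,m}^t\|u_{1,m}^t\|_{L^2(\Gamma_S)}^2$, so in fact $R_m=\sigma_{1,m}^t$, which is stronger than the claimed $R_m\le\sigma_{1,m}^t$.

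For the limit, the monotone half is routine: $\operatorname{ran}Q_N\subset\operatorname{ran}Q_{N+1}$ forces $\sigma_{1,N+1}^t\le\sigma_{1,N}^t$ via the characterization above, and the same characterization restricted to a subset of the form domain gives $\sigma_{1,N}^t\ge\sigma_1^t$; hence $\sigma_{1,N}^t$ decreases to some $\sigma_\infty\ge\sigma_1^t$. For the reverse bound I would feed the true eigenfunction into the finite sections. Expand $u_1^t=\sum_{k\ge0}\widetilde C_k\Psi_k$ on $\Om$ in the bispherical basis, where each $\Psi_k$ is harmonic, vanishes on $\Gamma_D$, and restricts on $\Gamma_S$ to a multiple of $f_k$; then the partial sum $S_N u_1^t:=\sum_{k<N}\widetilde C_k\Psi_k$ is precisely the harmonic extension of $w_N:=S_N u_1^t|_{\Gamma_S}\in\operatorname{ran}Q_N$, so the characterization of $\sigma_{1,N}^t$ and Green's identity give $\sigma_{1,N}^t\le\|\nabla S_N u_1^t\|_{L^2(\Om)}^2/\|S_N u_1^t\|_{L^2(\Gamma_S)}^2$. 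Sending $N\to\infty$ and using $S_N u_1^t\to u_1^t$ in $H^1(\Om)$ (whence $w_N\to u_1^t|_{\Gamma_S}$ in $L^2(\Gamma_S)$) yields $\sigma_\infty\le\sigma_1^t$, and so $\lim_{N\to\infty}\sigma_{1,N}^t=\sigma_1^t$; chained with the sandwich this also gives $R_m=\sigma_{1,m}^t\downarrow\sigma_1^t$.

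The crux is the convergence $S_N u_1^t\to u_1^t$ in $H^1(\Om)$ --- equivalently $w_N\to u_1^t|_{\Gamma_S}$ in $H^{1/2}(\Gamma_S)$ --- which is what drives the Rayleigh quotients of the truncations down to $\sigma_1^t$ rather than merely keeping them above it. This is where I would invoke \cite{Hong:2023:SDE:preprint}: the explicit recursion it supplies for the coefficients $\widetilde C_k$, and in particular their exponential decay, combined with the at-most-polynomial growth of $\|\Psi_k\|_{H^1(\Om)}$, control the tail $\sum_{k\ge N}\widetilde C_k\Psi_k$ and yield the stated convergence. Pushing the same estimate quantitatively also delivers the exponential rate announced in the abstract, since $\sigma_{1,m}^t-\sigma_1^t=R_m-\sigma_1^t$ is then dominated by the squared tail of the expansion of $u_1^t$.
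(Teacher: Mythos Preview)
There is a genuine gap, and it stems from a mis-identification of the inner product. The functions $\tilde g_k(s)=(\cosh\xi_2-s)^{n/2}G_k^{(n/2)}(s)$ are orthogonal in the weighted space $L^2\big([-1,1];(1-s^2)^{(n-1)/2}(\cosh\xi_2-s)^{-n}ds\big)$, \emph{not} in $L^2(\Gamma_S)$: by \eqnref{surint:B2} the surface measure on $\Gamma_S=\p B_2$ restricted to $\theta$-only functions is proportional to $\sin^n\theta\,(\cosh\xi_2-\cos\theta)^{-(n+1)}\,d\theta$, which carries one extra factor of $(\cosh\xi_2-\cos\theta)^{-1}$. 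Hence $Q_N$ is \emph{not} the $L^2(\Gamma_S)$-orthogonal projection, $\mathbb{L}_N$ is \emph{not} the $L^2(\Gamma_S)$-compression of $\mathcal{L}$, and $\sigma_{1,N}^t$ is \emph{not} equal to $\min_{0\ne g\in H_N}\langle\mathcal{L}g,g\rangle_{L^2(\Gamma_S)}/\|g\|_{L^2(\Gamma_S)}^2$; it is the minimum of the Rayleigh quotient in the weighted inner product $(\cdot,\cdot)$ of \eqnref{inner:def:Hn}, cf.\ \eqnref{vari:sigma1N}.

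This breaks both of your shortcuts. First, your claimed equality $R_m=\sigma_{1,m}^t$ is false. Writing $\hat u:=u_{1,m}^t|_{\Gamma_S}$ and $f_m^t:=\mathcal{L}\hat u-\sigma_{1,m}^t\hat u$, one has (see \eqnref{eq:f:correction}) that $f_m^t$ is a nonzero multiple of $\tilde g_m$, and since $\tilde g_m$ is \emph{not} $L^2(\Gamma_S)$-orthogonal to $H_m$, the cross term $\int_{\Gamma_S}\hat u\, f_m^t\,dS$ does not vanish; in fact the paper shows it is strictly negative, so $R_m<\sigma_{1,m}^t$. Second, your ``routine'' inequality $\sigma_{1,N}^t\ge\sigma_1^t$ cannot be read off from Rayleigh--Ritz, because $\sigma_{1,N}^t$ and $\sigma_1^t$ are infima of \emph{different} Rayleigh quotients (weighted vs.\ $L^2(\Gamma_S)$). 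The paper recovers this inequality only a posteriori, from the chain $\sigma_1^t\le R_m\le\sigma_{1,m}^t$.

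What the paper actually does for $R_m\le\sigma_{1,m}^t$ is the real content you are missing: it evaluates $\int_{\Gamma_S}\frac{\p u_{1,m}^t}{\p n}f_m^t\,dS$ in two ways and exploits the identity that $\frac{\p u_{1,m}^t}{\p n}+\frac{n\sinh\xi_2}{2\alpha}u_{1,m}^t$ carries an extra $(\cosh\xi_2-\cos\theta)$ factor on $\Gamma_S$, which exactly cancels the weight discrepancy and makes the genuine Gegenbauer orthogonality applicable, yielding \eqnref{eq:sign:intuf} and hence $\int_{\Gamma_S}u_{1,m}^t f_m^t\le0$. For the limit, the paper plugs $Q_N u_1^t$ into the \emph{weighted} variational characterization \eqnref{vari:sigma1N} and computes the resulting one-term overflow explicitly via the exponential decay \eqnref{Cm_tilde:uppperbound} to get $\limsup\sigma_{1,N}^t\le\sigma_1^t$; combined with $\sigma_1^t\le\sigma_{1,m}^t$ and the monotonicity of $(\sigma_{1,N}^t)$ this gives the equality. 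Your high-level idea of feeding truncations of $u_1^t$ is the right instinct, but it must be executed in the weighted inner product, and the inequality $R_m\le\sigma_{1,m}^t$ needs the orthogonality trick above rather than a cancellation that does not occur.
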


\begin{theorem}\label{thm:sigma:decay}
Let $\Om$ be given by \eqnref{spheri_shell}. 
For some $\delta,C,N_0>0$ independent of $N$, it holds that
\beq\label{eqn:sigma:decay:pos}
0<\sigma_{1,N}^t-\sigma_1^t\leq C e^{-N\delta}\quad\mbox{for all }N\geq N_0.
\eeq
\end{theorem}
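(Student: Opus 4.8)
The plan is to combine the two-sided bound from Theorem \ref{prop:computation:validation:eq} with a quantitative estimate on how well the finite section Ritz profile approximates the true first eigenfunction. From Theorem \ref{prop:computation:validation:eq} we already have $\sigma_1^t \le \sigma_{1,N}^t$, so the left inequality in \eqnref{eqn:sigma:decay:pos} is immediate (strict positivity follows because $u_{1,N}^t$ cannot equal $u_1^t$ for finite $N$, since the exact eigenfunction has an infinite Fourier--Gegenbauer tail — this should be stated carefully). For the upper bound, the chain in \eqnref{bounds:stronger} gives
\[
0 < \sigma_{1,N}^t - \sigma_1^t \le \frac{\|\nabla u_{1,N}^t\|_{L^2(\Om)}^2}{\|u_{1,N}^t\|_{L^2(\Gamma_S)}^2} - \sigma_1^t,
\]
so it suffices to show the Rayleigh quotient of the truncated profile $u_{1,N}^t$ converges to $\sigma_1^t$ exponentially in $N$.

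First I would set up the comparison on $\Gamma_S$ in the bispherical/Fourier--Gegenbauer basis: writing the exact boundary trace of $u_1^t|_{\p B_2}$ via the coefficients $\widetilde{C}_m$ as in \eqnref{eq:final series expansion of u_1^t}, and $u_{1,N}^t|_{\p B_2}$ via its truncated coefficients, I would estimate the difference in terms of the tail $\sum_{m>N}|\widetilde{C}_m|^2$ (in the appropriate weighted $\ell^2$ norm matching the $G_m^{(n/2)}$ normalization). The key analytic input is the exponential decay of the coefficients $\widetilde{C}_m$: because the eigenfunction $u_1^t$ is harmonic in $\Om$ and hence real-analytic up to $\Gamma_S$, and because the bispherical coordinate $\xi$ separates the two boundary spheres by a positive gap $\xi_2 - \xi_1 > 0$, the Gegenbauer coefficients of the trace must satisfy $|\widetilde{C}_m| \le C \rho^{m}$ for some $\rho = \rho(t) \in (0,1)$ and $C>0$. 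This is the standard phenomenon that separation of variables in bispherical coordinates yields geometric series whose ratio is governed by $e^{-(\xi_2-\xi_1)}$ or similar; I would either invoke the recursive structure of the coefficients from \cite{Hong:2023:SDE:preprint} directly, or argue via analyticity of the harmonic extension across $\Gamma_S$. Then the Dirichlet energy term $\|\nabla u_{1,N}^t\|_{L^2(\Om)}^2$ is controlled using the explicit harmonic extension (the solution operator for the mixed problem in \eqnref{def:oper:L}), whose action on each Gegenbauer mode is again explicit and mode-wise bounded, so the energy of the tail is again $O(\rho^{2N})$.

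The second step is to convert these tail estimates into the Rayleigh quotient bound. Since $\|u_1^t\|_{L^2(\Gamma_S)}^2 > 0$ is a fixed positive constant and $u_{1,N}^t \to u_1^t$ in $H^1(\Om)$ with the tail controlled by $C\rho^{2N}$, both numerator and denominator converge at rate $\rho^{2N}$, and a routine perturbation of the quotient $\frac{a_N}{b_N}$ around $\frac{a}{b} = \sigma_1^t$ gives $\left|\frac{a_N}{b_N} - \sigma_1^t\right| \le C' \rho^{2N}$ for $N$ large enough that $b_N \ge \frac12 b$. Setting $\delta = -2\log\rho > 0$ and choosing $N_0$ so that the denominator stays bounded below yields \eqnref{eqn:sigma:decay:pos}. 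I expect the main obstacle to be the rigorous justification of the geometric decay rate $|\widetilde{C}_m| \le C\rho^m$ with an explicit $\rho<1$: one must track through the recursive relations for $\widetilde{C}_m$ coming from the Steklov condition (the three-term recursion visible in \eqnref{eq:final series expansion of u_1^t}) and show that the eigenfunction's coefficients — which solve this recursion together with the eigenvalue constraint — indeed lie in the decaying solution branch, with the decay rate determined by the coordinate gap $\xi_2$. The subtlety is that a three-term recursion generically has both a decaying and a growing solution, and one needs the boundary/integrability condition at $\Gamma_D$ to pin down the decaying one; this is where the analyticity argument (harmonic extendability of $u_1^t$ past $\Gamma_S$, which holds because $\Gamma_S$ is in the interior of the region where the relevant bispherical harmonic series converges) gives the cleanest route to the exponential rate.
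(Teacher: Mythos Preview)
Your central reduction is backwards. The chain \eqnref{bounds:stronger} reads
\[
\sigma_1^t \;\le\; \frac{\|\nabla u_{1,m}^t\|_{L^2(\Om)}^2}{\|u_{1,m}^t\|_{L^2(\Gamma_S)}^2} \;\le\; \sigma_{1,m}^t,
\]
so the Rayleigh quotient of the Ritz profile $u_{1,N}^t$ lies \emph{below} $\sigma_{1,N}^t$, not above it. Your displayed inequality $\sigma_{1,N}^t - \sigma_1^t \le R(u_{1,N}^t) - \sigma_1^t$ therefore points the wrong way, and showing $R(u_{1,N}^t)\to\sigma_1^t$ exponentially yields no upper control on $\sigma_{1,N}^t - \sigma_1^t$. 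A second difficulty: your plan to bound $\|u_{1,N}^t-u_1^t\|$ via coefficient tails presumes that the entries $\widetilde{C}^{(N)}_k$ of the $\mathbb{L}_N$-eigenvector are close to the true $\widetilde{C}_k$. That is eigen\emph{vector} convergence for the truncated matrix, which is exactly Theorem \ref{thm:eigenfunction}; in the paper it is deduced \emph{from} Theorem \ref{thm:sigma:decay}, so relying on it here would be circular.

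The paper reverses the roles of the two variational principles. Instead of inserting the approximate eigenfunction $u_{1,N}^t$ into \eqnref{variational characterization}, one inserts the projection $Q_N u_1^t$ of the \emph{exact} eigenfunction into the finite-section principle \eqnref{vari:sigma1N}, obtaining
\[
\sigma_{1,N}^t - \sigma_1^t \;\le\; \frac{\bigl(Q_N\mathcal{L}[Q_Nu_1^t - u_1^t],\, Q_N u_1^t\bigr)}{\bigl(Q_N u_1^t,\, Q_N u_1^t\bigr)}.
\]
Because $\mathcal{L}$ is tridiagonal in the basis $\{g_k\}$, the numerator collapses to a single cross term proportional to $\widetilde{C}_{N-1}\widetilde{C}_N$, and the exponential bound follows at once from the already available decay \eqnref{Cm_tilde:uppperbound}; no separate analyticity argument for the coefficients is needed. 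Strict positivity comes from the strict monotone decrease of $(\sigma_{1,N}^t)_N$ (Lemma \ref{lemma:num:eig:decrease}) together with $\sigma_1^t=\lim_N\sigma_{1,N}^t$; the observation that $u_{1,N}^t\neq u_1^t$ does not by itself rule out equality of the eigenvalues.
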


\begin{theorem}\label{thm:eigenfunction}
Let $\Om$ be given by \eqnref{spheri_shell}. Let $u_{1,m}^t$ be an eigenfunction corresponding to $\sigma_{1,m}^t$ (see \eqnref{def:u1N}). We normalize $u_1^t$ and $u_{1,m}^t$ so that $\|\nabla u_1^t\|_{L^2(\Om)}=\|\nabla u_{1,m}^t\|_{L^2(\Om)}=1$ and $\langle\nabla u_{1,m}^t,\, \nabla u_1^t\rangle_{L^2(\Om)}\ge0$. 
	For some $\delta,C,M>0$ independent of $m$, it holds that
	\begin{equation}\label{eq:converge:eigftn}
		\|u_{1,m}^t- u_{1}^t\|_{H^1(\Om)}\leq C e^{-m\delta}\quad\mbox{for all }m\geq M.	
	\end{equation}
\end{theorem}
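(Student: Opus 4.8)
The plan is to deduce the $H^1$-convergence of the eigenfunctions from the already-established exponential convergence of the eigenvalues (Theorem \ref{thm:sigma:decay}), exploiting the spectral gap $\sigma_1^t<\sigma_2^t$ from \eqnref{first:simple} and the variational characterizations \eqnref{variational characterization}--\eqnref{vari:second}. The key identity to set up first is the one relating the Dirichlet energy and the $L^2(\Gamma_S)$-norm: for a true eigenfunction $u_1^t$ one has $\|\nabla u_1^t\|_{L^2(\Om)}^2=\sigma_1^t\,\|u_1^t\|_{L^2(\Gamma_S)}^2$, and for the finite-section eigenfunction $u_{1,m}^t$ the analogous relation holds at the level of the matrix $\mathbb{L}_m$. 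With the normalization $\|\nabla u_1^t\|_{L^2(\Om)}=\|\nabla u_{1,m}^t\|_{L^2(\Om)}=1$, write $w_m:=u_{1,m}^t-u_1^t$; since $w_m\in H^1_\diamond(\Om)$, harmonicity of $u_1^t$ and the Steklov boundary condition give $\langle\nabla u_1^t,\nabla w_m\rangle_{L^2(\Om)}=\sigma_1^t\,\langle u_1^t, w_m\rangle_{L^2(\Gamma_S)}$, so one obtains an expansion $\|\nabla w_m\|_{L^2(\Om)}^2 = 2-2\langle\nabla u_{1,m}^t,\nabla u_1^t\rangle_{L^2(\Om)}$, and the task reduces to bounding $1-\langle\nabla u_{1,m}^t,\nabla u_1^t\rangle_{L^2(\Om)}$ by $Ce^{-m\delta}$.

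The main step is a standard spectral-gap (Davis--Kahan type) argument adapted to this generalized eigenvalue setting. Decompose $u_{1,m}^t$ in terms of the true eigenbasis: write $u_{1,m}^t = a\,u_1^t + v$, where $v$ lies in the $\|\cdot\|_{L^2(\Gamma_S)}$-orthogonal complement of $u_1^t$ within $H^1_\diamond(\Om)$, so that $v$ is ``spanned by higher modes'' in the sense that its Rayleigh quotient is $\ge\sigma_2^t$. Testing the Rayleigh quotient of $u_{1,m}^t$ — which equals $\sigma_{1,m}^t$ by Theorem \ref{prop:computation:validation:eq} (more precisely the finite-section eigenfunction realizes $\langle\mathbb{L}_m x,x\rangle/\|x\|^2=\sigma_{1,m}^t$) — against this decomposition yields
\begin{equation*}
\sigma_{1,m}^t\,\|u_{1,m}^t\|_{L^2(\Gamma_S)}^2 = a^2\,\sigma_1^t\,\|u_1^t\|_{L^2(\Gamma_S)}^2 + \|\nabla v\|_{L^2(\Om)}^2 \ge a^2\,\sigma_1^t\,\|u_1^t\|_{L^2(\Gamma_S)}^2 + \sigma_2^t\,\|v\|_{L^2(\Gamma_S)}^2,
\end{equation*}
and comparing with $\|u_{1,m}^t\|_{L^2(\Gamma_S)}^2 = a^2\|u_1^t\|_{L^2(\Gamma_S)}^2+\|v\|_{L^2(\Gamma_S)}^2$ forces
$$\|v\|_{L^2(\Gamma_S)}^2 \le \frac{\sigma_{1,m}^t-\sigma_1^t}{\sigma_2^t-\sigma_{1,m}^t}\,a^2\,\|u_1^t\|_{L^2(\Gamma_S)}^2,$$
which is $O(e^{-m\delta})$ by Theorem \ref{thm:sigma:decay} once $m$ is large enough that $\sigma_{1,m}^t<\sigma_2^t$ (possible because $\sigma_{1,m}^t\to\sigma_1^t<\sigma_2^t$). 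To pass from smallness of $\|v\|_{L^2(\Gamma_S)}$ to smallness of $\|\nabla v\|_{L^2(\Om)}$ one uses that $v$ is harmonic in $\Om$ with zero Dirichlet data on $\Gamma_D$, hence $\|\nabla v\|_{L^2(\Om)}^2 = \langle \mathcal{L}\hat v,\hat v\rangle_{L^2(\Gamma_S)}$, and together with $\|\nabla v\|_{L^2(\Om)}^2\le\sigma_{1,m}^t\|u_{1,m}^t\|_{L^2(\Gamma_S)}^2$ (boundedness of the numerator) this is already controlled — more carefully, from the displayed identity $\sigma_{1,m}^t\|u_{1,m}^t\|^2=a^2\sigma_1^t\|u_1^t\|^2+\|\nabla v\|_{L^2(\Om)}^2$ one extracts $\|\nabla v\|_{L^2(\Om)}^2 \le (\sigma_{1,m}^t-\sigma_1^t)a^2\|u_1^t\|^2 + \sigma_1^t\|v\|^2_{L^2(\Gamma_S)} = O(e^{-m\delta})$. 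Finally, normalization $\|\nabla u_{1,m}^t\|_{L^2(\Om)}=1=a^2\|\nabla u_1^t\|^2 + \|\nabla v\|^2$ (using $\langle\nabla u_1^t,\nabla v\rangle_{L^2(\Om)}=\sigma_1^t\langle u_1^t,v\rangle_{L^2(\Gamma_S)}=0$) gives $a^2 = 1-O(e^{-m\delta})$, hence $a\ge 0$ large forces (with the sign normalization $\langle\nabla u_{1,m}^t,\nabla u_1^t\rangle\ge0$) $a = 1-O(e^{-m\delta})$, and then
$$\|u_{1,m}^t-u_1^t\|_{H^1(\Om)}^2 \lesssim \|\nabla(u_{1,m}^t-u_1^t)\|_{L^2(\Om)}^2 = (a-1)^2 + \|\nabla v\|_{L^2(\Om)}^2 = O(e^{-m\delta}),$$
where the first inequality uses the Poincaré inequality on $H^1_\diamond(\Om)$ (valid since $\Gamma_D\neq\emptyset$). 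Taking square roots and renaming $\delta$ completes the argument.

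The step I expect to be the main obstacle is making the decomposition $u_{1,m}^t = a\,u_1^t + v$ and the bound ``$v$ spanned by higher modes $\Rightarrow \|\nabla v\|^2\ge \sigma_2^t\|v\|^2_{L^2(\Gamma_S)}$'' rigorous in the correct function space: the finite-section eigenfunction $u_{1,m}^t$ is defined through the truncated matrix $\mathbb{L}_m$ and the series \eqnref{eq:final series expansion of u_1^t}, so one must verify that the object produced by Definition \ref{def:trun:u} genuinely lies in $H^1_\diamond(\Om)$, is harmonic, and that its Rayleigh quotient equals $\sigma_{1,m}^t$ — i.e. that the finite-section construction is consistent with the continuous variational framework. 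This is essentially the content that Theorem \ref{prop:computation:validation:eq} was built to supply, so the work here is to quote it in the right form; the remaining spectral-gap estimate is then routine linear algebra/functional analysis. A secondary technical point is ensuring all constants $\delta,C,M$ can be taken independent of $m$ (and, implicitly, that the dependence on $t$ is harmless for fixed $t$), which follows because the only $m$-dependent input is the exponential bound of Theorem \ref{thm:sigma:decay} and the threshold $M$ beyond which $\sigma_{1,m}^t<\sigma_2^t$.
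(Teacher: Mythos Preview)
Your spectral-gap argument is sound in outline, but one step is misstated: the continuous Rayleigh quotient $\|\nabla u_{1,m}^t\|_{L^2(\Omega)}^2/\|u_{1,m}^t\|_{L^2(\Gamma_S)}^2$ does \emph{not} equal $\sigma_{1,m}^t$. The equality $\langle\mathbb{L}_m x,x\rangle/(x,x)=\sigma_{1,m}^t$ holds for the matrix inner product \eqref{inner:def:Hn}, which is not the $L^2(\Gamma_S)$ pairing. What Theorem~\ref{prop:computation:validation:eq} actually supplies is the \emph{inequality} $\|\nabla u_{1,m}^t\|_{L^2(\Omega)}^2\le\sigma_{1,m}^t\,\|u_{1,m}^t\|_{L^2(\Gamma_S)}^2$, and fortunately that is exactly the direction you need: writing $u_{1,m}^t=a\,u_1^t+v$ with $\langle v,u_1^t\rangle_{L^2(\Gamma_S)}=0$ (which, via the Steklov condition on $u_1^t$, also forces $\langle\nabla v,\nabla u_1^t\rangle_{L^2(\Omega)}=0$), the inequality combined with $\|\nabla v\|^2\ge\sigma_2^t\|v\|_{L^2(\Gamma_S)}^2$ gives $(\sigma_2^t-\sigma_{1,m}^t)\|v\|_{L^2(\Gamma_S)}^2\le(\sigma_{1,m}^t-\sigma_1^t)\,a^2\|u_1^t\|_{L^2(\Gamma_S)}^2$ and hence $\|\nabla v\|^2=1-a^2\le C(\sigma_{1,m}^t-\sigma_1^t)$. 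With this correction your proof goes through unchanged.

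The paper takes a different route. It sets $w_m=u_{1,m}^t-q_m u_1^t$ with $q_m=\langle\nabla u_{1,m}^t,\nabla u_1^t\rangle$, writes the boundary value problem satisfied by $w_m$ (whose Steklov condition carries the explicit defect $f_m^t=\partial_n u_{1,m}^t|_{\Gamma_S}-\sigma_{1,m}^t u_{1,m}^t|_{\Gamma_S}$), and applies Lax--Milgram on the orthogonal complement of $u_1^t$; coercivity there has constant $1-\sigma_1^t/\sigma_2^t$, i.e.\ the same spectral gap you exploit. It then bounds $\|f_m^t\|_{L^2(\Gamma_S)}^2$ by $C(\sigma_{1,m}^t-\sigma_1^t)$ by revisiting the integral identities from the proof of Theorem~\ref{prop:computation:validation:eq}. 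Your Davis--Kahan style argument is more elementary: it bypasses Lax--Milgram and any explicit control of $f_m^t$, needing only the packaged Rayleigh-quotient inequality of Theorem~\ref{prop:computation:validation:eq}. The paper's approach, by contrast, makes the boundary residual $f_m^t$ visible and ties the estimate directly to the finite-section structure.
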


The finite dimensional matrix $\mathbb{L}_N$ is symmetric, positive definite and tridiagonal, and each entry of $\mathbb{L}_N$ is explicitly determined in terms of $r_1$, $r_2$, $t$ and $n$. Therefore, one can easily compute the first eigenvalue of $\mathbb{L}_N$ and estimate $\sigma_1^t$ in $\RR^{n+2}$ for arbitrary $n\geq 1$. This eigenvalue computation does not require mesh generation unlike the finite difference method \cite{Kuttler:1970:FDA,Froese:2018:MFD} or the finite element method \cite{Fix:1973:EAF,Boffi:2010:FEA,Borthagaray:2018:FEA}. In addition to the computational ease, it is robust in that its exponential convergence is guaranteed by Theorems \ref{prop:computation:validation:eq} and \ref{thm:sigma:decay}.

The rest of this paper is organized as follows. In section \ref{sec:sec2}, we represent the first eigenfunction as a series expansion by using the bispherical coordinates. Section \ref{section:computation} introduces the finite section method to approximate the first eigenvalue and provides its convergence. In section \ref{sec:numerical}, we propose a numerical scheme to compute $\sigma_1^t$ and demonstrate numerical examples with various geometric configurations. We conclude the paper with brief discussions in section \ref{sec:conclusion}.

\section{The first eigenfunction $u_1^t$ in bishperical coordinates}\label{sec:sec2}
\subsection{Bispherical coordinates} \label{sec: the first: bispherical coordinates}
For a given fixed $\alpha>0$. 
The bispherical coordinates for $\mathbf{x}=(x_1,x_2,x_3)\in\RR^3$ are defined by 
\begin{align}\label{Bi:3D}
x_1=\frac{\alpha\sinh\xi}{\cosh\xi-\cos\theta}, \quad
x_2=\frac{\alpha\sin\theta\cos\varphi_1}{\cosh\xi-\cos\theta},\quad
x_3=\frac{\alpha\sin\theta\sin\varphi_1}{\cosh\xi-\cos\theta}.
\end{align}
Here, we denote by ${B}^3_j(\xi,\theta,\varphi_1)$ the coordinate functions of $x_j$.
We then recursively define the  bispherical coordinates for ${x}=(x_1,\dots,x_{n+2})\in\RR^{n+2}$, $n\geq 2$, by
\begin{align}\notag
x_j=&B^{n+2}_{j}\left(\xi,\theta,\varphi_1,\dots \varphi_{n}\right)\\ \label{bihyperspherical coordinates}
:=&
\begin{cases}
\ds B^{n+1}_{j}\left(\xi,\theta,\varphi_1,\dots \varphi_{n-1}\right)\quad& \mbox{for }j=1,\dots,n,\\[1mm]
\ds B^{n+1}_{n+1}\left(\xi,\theta,\varphi_1,\dots \varphi_{n-1}\right) \cos\varphi_n\quad&\mbox{for }j=n+1,\\[1mm]
\ds B^{n+1}_{n+1}\left(\xi,\theta,\varphi_1,\dots \varphi_{n-1}\right)\sin\varphi_n\quad&\mbox{for }j=n+2,
\end{cases}
\end{align}
for $(\xi,\theta,\varphi_1,\dots \varphi_{n})\in \RR\times[0,\pi]^{n}\times[0,2\pi)$.
Set ${y}=(y_1,y_2,y_3,\dots,y_{n+2})=(\xi,\theta,\varphi_1,\dots,\varphi_n)$ and define
$$g_{ij}:=\left\langle \pd{{x}}{y_i},\, \pd{{x}}{y_j}\right\rangle_{\RR^{n+2}}\quad \mbox{for }i,j=1,\dots,n+2.$$
We have
\beq\label{Jacobian}
\sqrt{\det (g_{ij})}=\frac{\alpha^{n+2}\sin^{n}\theta\sin^{n-1}\varphi_1\cdots\sin^2\varphi_{n-2}\sin\varphi_{n-1}}{(\cosh\xi-\cos\theta)^{n+2}}.
\eeq

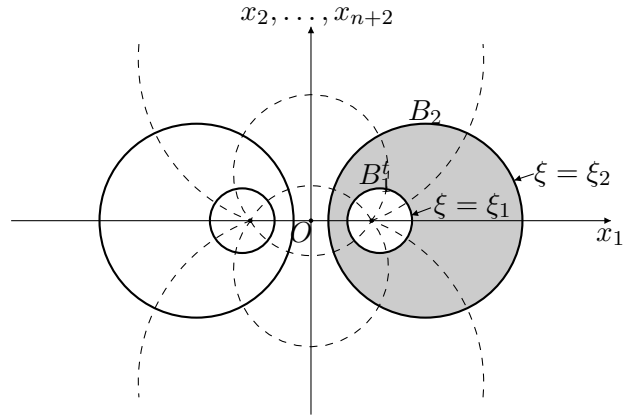
\begin{figure}[b!]
\begin{center}

\begin{tikzpicture}[scale=0.43]

\coordinate  (C) at (3.537742925, 0);
\coordinate (X) at (2.125, 0);
\fill[gray!40,even odd rule] (X) circle (1) (C) circle (3);

\node at (2,1.37){$B_1^t$};
\node at (3.537742925,3.35){$B_2$};

\draw[thick] (3.537742925, 0) circle (3);
\draw[thick] (2.125, 0) circle (1);
\draw[thick] (-3.537742925, 0) circle (3);
\draw[thick] (-2.125, 0) circle (1);

\draw (0, -6.0) -- (0, 6.0);
\draw (-9.27, 0) -- (9.27, 0);

\fill (0, 6.0) -- (-0.1, 5.8) -- (0.1, 5.8);
\fill (9.27, 0) -- (9.07, 0.1) -- (9.07, -0.1);
\draw (9.27, -0.5) node {$x_1$};
\draw (0.2, 6.3) node {$x_2,\dots, x_{n+2}$};
\draw (-0.3, -0.35) node {$O$};

\draw[dashed, domain=30:150] plot ({2.1650635*cos(\x)},{-1.08253175+2.1650635*sin(\x)});
\draw[dashed, domain=-150:-30] plot ({2.1650635*cos(\x)},{1.08253175+2.1650635*sin(\x)});
\draw[dashed, domain=-40:220] plot ({0+2.40117*cos(\x)}, {1.5+2.40117*sin(\x)});
\draw[dashed, domain=140:400] plot ({0+2.40117*cos(\x)}, {-1.5+2.40117*sin(\x)});
\draw [dashed, domain=-5:70] plot ({5.34*cos(\x)}, {5.34*sin(\x)-5});
\draw [dashed, domain=175:250] plot ({5.34*cos(\x)}, {5.34*sin(\x)+5});
\draw [dashed, domain=110:185] plot ({5.34*cos(\x)}, {5.34*sin(\x)-5});
\draw [dashed, domain=290:365] plot ({5.34*cos(\x)}, {5.34*sin(\x)+5});

\fill (1.875, 0) circle (0.07);
\fill (-1.875, 0) circle (0.07);
\fill (0, 0) circle (0.07);

\draw (5, 0.46) node {$\xi=\xi_1$};
\draw (8.1, 1.45) node {$\xi=\xi_2$};

\draw (3.1098, 0.173648) -- (3.7098, 0.4);
\draw (6.29006, 1.25) -- (6.89006, 1.45);

\fill (3.1098, 0.173648) -- (3.1098+0.156693501419659, 0.173648+0.159521618011000) -- (3.1098+0.221633395260595, 0.173648-0.0296418303291268);
\fill (6.29006, 1.25) -- (6.29006+0.156693501419659, 1.25+0.159521618011000) -- (6.29006+0.221633395260595, 1.25-0.0296418303291268);

\end{tikzpicture}

\end{center}
\vskip -4mm
\caption{\label{fig:coord:3d}$\xi$ (thick curves) and $\theta$ (dashed curves) level surfaces of the bispherical coordinate system in $\mathbb{R}^{n+2}$. We choose $\alpha$ by \eqnref{def:alpha} so that $\p B_1^t$ and $\p B_2$ are $\xi$-level curves. }
\end{figure}

We investigate the first Steklov--Dirichlet eigenvalue on eccentric spherical shells $\Om=B_2\setminus\overline{B_1^t}$ in general dimensions where $r_1$ (resp. $r_2$) denotes the radius of the inner (resp. outer) boundary sphere and $t$ is the distance between the centers of the inner and outer boundary spheres. 

Set
\begin{align}\label{def:alpha}
\alpha&=\frac{1}{2t}\sqrt{\left((r_2+r_1)^2-t^2\right)\left((r_2-r_1)^2-t^2\right)},\\ \notag
\xi_j&=\ln\big(({\alpha}/{r_j})+\sqrt{({\alpha}/{r_j})^2+1}\big),\ j=1,2.
\end{align}
Note that $\xi_1>\xi_2>0$. 
By rotating and translating $\Om$ with an appropriately chosen $t_0$, we have (see Figure \ref{fig:coord:3d})
\beq\label{eqn:B1B2:2}
B_1^t=t_0e_1+B(-te_1,r_1),\quad B_2=t_0e_1+B(0,r_2)
\eeq
and
\begin{gather*}
\p B_1^t=\{\xi=\xi_1\},\ \p B_2=\{\xi=\xi_2\}.
\end{gather*}
Here,  $e_1=(1,0,\dots,0)$ and $B(x,r)$ indicates the ball centered at $x$ with the radius $r$.
For a function $u$, it holds that
\beq\label{normal:u:3d}
\left.\pd{u}{{n}}\right|_{\p B_2}=\left.-\frac{1}{h({\xi},\theta)}\pd{u}{\xi}\right|_{\xi=\xi_2}\quad\mbox{with } h(\xi,\theta) = \frac{\alpha}{\cosh\xi-\cos\theta}.
\eeq
The surface integral on $\p B_2$ admits the relation that
\beq\label{surint:B2}
\begin{aligned}
&\int_{\p B_2}\cdot\ dS
 = \int_0^\pi\cdots\int_0^\pi\int_{0}^{2\pi}\cdot\ \frac{\alpha^{n+1}\sin^n\theta  }{(\cosh\xi_2-\cos\theta)^{n+1}}\prod_{j=1}^n\left(\sin^{n-j}\varphi_j\right)d\theta d\varphi_1\dots d\varphi_n.
\end{aligned}
\eeq

\subsection{Series expressions of the first eigenfunction} \label{sec:first eigenfunction}

For a given fixed $\lambda\in(0,\infty)$, the Gegenbauer polynomials (or, ultraspherical polynomials) $G_m^{(\lambda)}(s)$ are given by the generating relation (see, for instance, (4.7.23) in \cite{Szego:1975:OP})
$$(1-2st+t^2)^{-\lambda}=\sum_{m=0}^\infty G_{m}^{(\lambda)}(s)\,t^m\quad\mbox{for }s\in(-1,1),\ t\in[-1,1].$$
For instance, the lowest order polynomials are $G_0^{(\lambda)}(s)=1$ and $G_1^{\lambda}(s)=2s\lambda$. Higher-order terms can be easily obtained by the recurrence relation (see (4.7.17) in \cite{Szego:1975:OP}): for all $m\geq 2$, 
    \begin{equation}\label{Gegen:recur}
\begin{gathered}
    mG_m^{(\lambda)}(s)-2(m+\lambda-1)sG_{m-1}^{(\lambda)}(s)+(m+2\lambda-2)G_{m-2}^{(\lambda)}(s)=0.
    \end{gathered}
    \end{equation}
    The Gegenbauer polynomials $G_{m}^{(\lambda)}(s)$, ${m\ge 0}$, form a complete orthogonal basis for the weighted $L^2$ space $L^2\left([-1,1];(1-s^2)^{\lambda-1/2}\,ds\right)$; we refer the reader, for instance, to \cite[Corollary IV 2.17]{Stein:1971:IFA}.

The Gegenbauer polynomial of $\cos\theta$ satisfies the expansion
\begin{equation} \label{Gegen:cos}
G_m^{(\lambda)}(\cos\theta) = \sum_{k=0}^m \frac{\lambda^{(k)}}{k!} \frac{\lambda^{(m-k)}}{(m-k)!} \cos((m-2k)\theta),
\end{equation}
which can be derived from the generating relation (see (3.15.13) in \cite{Bateman:1953:HTF}).

As shown in \cite[Theorem 7.33.1 and (4.7.3)]{Szego:1975:OP} (see also \cite{Hong:2023:SDE:preprint}), it holds that
   \begin{gather}\label{gegen_maximum}
   \left|G_m^{(\lambda)}(s)\right|\leq Cm^k\quad\mbox{for all }s\in[-1,1],\\
\label{Gegen:norm}
\|G_m^{(\lambda)}\|^2_{\lambda-\frac{1}{2}}:=\int_{-1}^1 \left(G_m^{(\lambda)}(s)\right)^2 (1-s^2)^{\lambda -\frac{1}{2}}\,ds
\le Cm^{k}
\end{gather}
for some constants $C=C(\lambda)>0$ and $k=k(\lambda)>0$.

The first eigenfunction $u_1^t$ on eccentric spherical shells admits the series expansion to  the Gegenbauer polynomials with $\lambda=\frac{n}{2}$ in terms of the bispherical coordinates as follows. 
\begin{lemma}[\cite{Hong:2023:SDE:preprint}]\label{prop:u_1^t seriesexp}
Fix an arbitrary $n\geq 1$. Let $\Om=B_2\setminus\overline{B_1^t}$ be an eccentric spherical shell in $\RR^{n+2}$ given by \eqnref{eqn:B1B2:2} and $u_1^t(x)$ be the first Steklov--Dirichlet eigenfunction for $\Om$. 
In terms of the bispherical coordinates $(\xi,\theta,\varphi_1,\dots, \varphi_{n})$ described in Section \ref{sec: the first: bispherical coordinates}, $u_1^t$ admits the expansion
\begin{align} \label{u_1^t seriesexp}
u_1^t\left({x}\right)=(\cosh\xi-\cos\theta)^{\frac{n}{2}}\,\sum_{m={0}}^\infty & C_m\left( e^{(m+\frac{n}{2})(2\xi_1-\xi)}-e^{(m+\frac{n}{2})\xi} \right)G_{m}^{\left(n/2\right)}(\cos\theta)
\end{align} 
with some constant coefficients $C_m$. 
\end{lemma}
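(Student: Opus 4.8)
The plan is to derive the expansion \eqnref{u_1^t seriesexp} by separating variables for the Laplace equation in the bispherical coordinates and then imposing the zero Dirichlet condition on $\p B_1^t=\{\xi=\xi_1\}$. First I would recall the classical fact that in bispherical coordinates the substitution $u=(\cosh\xi-\cos\theta)^{n/2}\,v$ transforms the equation $\Delta u=0$ in $\RR^{n+2}$ into a separable equation for $v$; this is the $(n+2)$-dimensional analogue of the $R$-separation familiar in $\RR^3$. Writing $v=\Xi(\xi)\,\Theta(\theta)\,\Phi(\varphi_1,\dots,\varphi_n)$, the angular factor $\Phi$ together with $\Theta$ gives the hyperspherical harmonics on the $(\theta,\varphi_1,\dots,\varphi_n)$-sphere, so $\Theta$ is governed by the Gegenbauer equation with parameter $\lambda=n/2$, forcing $\Theta(\theta)=G_m^{(n/2)}(\cos\theta)$ for integer $m\ge 0$ (the nonpolynomial solutions being excluded by regularity at $\theta=0,\pi$). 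With the separation constant fixed at $m(m+n)$ plus the curvature contribution from the $(\cosh\xi-\cos\theta)^{n/2}$ prefactor, the $\xi$-equation becomes constant-coefficient, with characteristic roots $\pm(m+\tfrac n2)$; hence $\Xi(\xi)=A_m e^{(m+n/2)\xi}+B_m e^{-(m+n/2)\xi}$.

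Next I would impose the boundary conditions. The domain $\Om=B_2\setminus\overline{B_1^t}$ corresponds to $\xi_2<\xi<\xi_1$, and on $\Gamma_D=\{\xi=\xi_1\}$ the eigenfunction vanishes: $A_m e^{(m+n/2)\xi_1}+B_m e^{-(m+n/2)\xi_1}=0$, so $B_m=-A_m e^{2(m+n/2)\xi_1}$. Absorbing $-A_m e^{2(m+n/2)\xi_1}\mapsto C_m$ (equivalently, writing $\Xi(\xi)=C_m\big(e^{(m+n/2)(2\xi_1-\xi)}-e^{(m+n/2)\xi}\big)$, which indeed vanishes at $\xi=\xi_1$) gives exactly the bracketed $\xi$-dependence in \eqnref{u_1^t seriesexp}. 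Superposing over $m\ge 0$ and invoking completeness of $\{G_m^{(n/2)}(\cos\theta)\}$ in $L^2([-1,1];(1-s^2)^{(n-1)/2}ds)$ — cited in the excerpt — shows that any harmonic function in $\Om$ vanishing on $\Gamma_D$ and depending only on $(\xi,\theta)$ has this form; the reduction to $(\xi,\theta)$-dependence is justified because the first eigenfunction $u_1^t$ is axially symmetric, which in turn follows from its simplicity \eqnref{first:simple} and the rotational symmetry of $\Om$ about the $x_1$-axis (a rotation maps $u_1^t$ to another first eigenfunction, hence to a scalar multiple of itself).

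Finally, I would address convergence and regularity so that the formal series is a genuine identity: the rapid decay of the coefficients $C_m$ (controlled by the exponential gap $e^{(m+n/2)(\xi_1-\xi)}$ being bounded on $\overline\Om$ together with the polynomial bounds \eqnref{gegen_maximum}--\eqnref{Gegen:norm} on the Gegenbauer factors) guarantees uniform convergence on compact subsets of $\Om$ and matching of traces on $\p\Om$, identifying the series with $u_1^t$ by uniqueness of the harmonic extension with the prescribed boundary data.

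The main obstacle, I expect, is verifying the $R$-separation identity in general dimension $n+2$ cleanly — i.e., that $u\mapsto(\cosh\xi-\cos\theta)^{-n/2}u$ conjugates $\Delta$ into an operator that separates with the Gegenbauer parameter landing at exactly $\lambda=n/2$ — since the Jacobian \eqnref{Jacobian} and the induced metric produce several cross terms that must be shown to cancel; the boundary-condition bookkeeping and the convergence estimates are comparatively routine. Since this lemma is quoted from \cite{Hong:2023:SDE:preprint}, I would present the separation-of-variables derivation in enough detail to make the statement self-contained and defer the lengthier metric computation to that reference.
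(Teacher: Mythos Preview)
The paper does not actually prove this lemma: it is quoted verbatim from \cite{Hong:2023:SDE:preprint} and stated without argument, so there is no ``paper's own proof'' to compare against. Your sketch is the standard derivation one would expect to find in that reference --- $R$-separation via the conformal prefactor $(\cosh\xi-\cos\theta)^{n/2}$, Gegenbauer equation in $\theta$ with $\lambda=n/2$, constant-coefficient ODE in $\xi$ with roots $\pm(m+n/2)$, Dirichlet matching at $\xi=\xi_1$, and reduction to $(\xi,\theta)$-dependence via simplicity and axial symmetry --- and it is correct in outline. Your identification of the main technical obstacle (verifying the $R$-separation identity in $\RR^{n+2}$ so that the Gegenbauer parameter comes out exactly $n/2$) is accurate, and your plan to defer that metric computation to \cite{Hong:2023:SDE:preprint} matches exactly what the present paper does.
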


For simplicity, we set
\begin{align}
\notag\ds\widetilde{C}_m&:=C_m\big( e^{(m+\frac{n}{2})(2\xi_1-\xi_2)}-e^{(m+\frac{n}{2})\xi_2}\big),\\ \label{def:cm}
\ds c_m&:=\left(\tanh\big(\big(m+\frac{n}{2}\big)(\xi_1-\xi_2)\big)\right)^{-1/2}\neq 0\quad\mbox{for each }m\geq 0. 
\end{align}
One can show the convergence of \eqnref{u_1^t seriesexp} by the following relation (see \cite{Hong:2023:SDE:preprint}):
\beq\label{Cm_tilde:uppperbound} 
\mbox{for some }\delta>0,\quad \left|\widetilde{C}_m\right|
=O\left(e^{-(m+\frac{n}{2})\frac{\delta}{2}}\right)\quad\mbox{as }m\rightarrow\infty. 
\eeq

As the right-hand side in \eqnref{u_1^t seriesexp} satisfies the conditions $\Delta u=0$ in 
$\Om$ and $u=0$ on $\p B_1$, it is enough to only consider the Steklov boundary condition $\frac{\partial u_1^t}{\partial {n}}=\sigma_1^tu_1^t$ on $\partial B_2$ to find the first eigenvalue $\sigma_1^t$.
We obtain \eqnref{eq:final series expansion of u_1^t} by \eqnref{u_1^t seriesexp} and \eqnref{Cm_tilde:uppperbound}.
By \eqnref{eq:final series expansion of u_1^t} and the Steklov boundary condition in \eqnref{eqn:Steklov-Dirichlet}, we have the following relations:
\begin{lemma}[\cite{Hong:2023:SDE:preprint}] \label{lemm: recursive}
	We have 
	\beq\notag
	\begin{aligned}
		&\big(-2\alpha\sigma_1^t-n\sinh\xi_2+nc_0^2\cosh\xi_2\big)\widetilde{C}_0 - nc_1^2\widetilde{C}_1 = 0,\\
		&\big(-2\alpha\sigma_1^t-n\sinh\xi_2+(2m+n)c_m^2\cosh\xi_2\big)\widetilde{C}_m - mc_{m-1}^2\widetilde{C}_{m-1} -  (m+n)c_{m+1}^2\widetilde{C}_{m+1}=0,\quad m\geq 1. 
	\end{aligned}
	\eeq
\end{lemma}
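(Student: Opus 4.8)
The plan is to derive the recursion in Lemma \ref{lemm: recursive} by substituting the two series in \eqnref{eq:final series expansion of u_1^t} into the Steklov boundary condition $\frac{\partial u_1^t}{\partial n} = \sigma_1^t u_1^t$ on $\p B_2$, and then equating coefficients of the Gegenbauer polynomials $G_m^{(n/2)}(\cos\theta)$. Since both the Dirichlet-harmonic extension property (so $\Delta u_1^t=0$ in $\Om$, $u_1^t=0$ on $\p B_1$) is already baked into the form \eqnref{u_1^t seriesexp}, the only remaining condition to impose is the Steklov condition on $\p B_2=\{\xi=\xi_2\}$. Using \eqnref{normal:u:3d}, the normal derivative on $\p B_2$ is $-\frac{1}{h(\xi_2,\theta)}\p_\xi u_1^t\big|_{\xi=\xi_2}$ with $h(\xi_2,\theta)=\alpha/(\cosh\xi_2-\cos\theta)$; differentiating \eqnref{u_1^t seriesexp} in $\xi$, evaluating at $\xi_2$, and using the definitions \eqnref{def:cm} of $\widetilde{C}_m$ and $c_m$ yields precisely the series displayed in \eqnref{eq:final series expansion of u_1^t}. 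The factor $(\cosh\xi_2-\cos\theta)^{n/2}$ is common to both sides, so after cancelling it the Steklov condition becomes an identity between two Gegenbauer series in $\cos\theta$.

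The key step is then to handle the term $\cos\theta \cdot G_m^{(n/2)}(\cos\theta)$ that appears after multiplying: writing $-2\alpha\sigma_1^t u_1^t$ against $\frac{\partial u_1^t}{\partial n}$ we get, on the normal-derivative side, a sum whose typical term carries a factor $\cosh\xi_2$ but no $\cos\theta$, while the $\frac{n\sinh\xi_2}{2}$-type terms are already clean; the mixing between neighboring indices $m-1,m,m+1$ must come out of the three-term Gegenbauer recurrence \eqnref{Gegen:recur}. Concretely, one uses \eqnref{Gegen:recur} in the form $2(m+\lambda)\,s\,G_m^{(\lambda)}(s) = (m+1)G_{m+1}^{(\lambda)}(s) + (m+2\lambda-1)G_{m-1}^{(\lambda)}(s)$ with $\lambda=n/2$, i.e. $s\,G_m^{(n/2)} = \frac{m+1}{2m+n}G_{m+1}^{(n/2)} + \frac{m+n-1}{2m+n}G_{m-1}^{(n/2)}$, to re-expand every $\cos\theta\,G_m^{(n/2)}(\cos\theta)$ back into the Gegenbauer basis. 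After this substitution, one collects the total coefficient of $G_m^{(n/2)}(\cos\theta)$ on each side. Matching the coefficient of $G_0^{(n/2)}$ gives the first displayed equation, and matching the coefficient of $G_m^{(n/2)}$ for $m\ge 1$ gives the second; the $c_{m\pm1}^2$ and the combinatorial prefactors $\tfrac{m}{2}, \tfrac{m+n}{2}$ in \eqnref{eq:final series expansion of u_1^t} combine with the recurrence coefficients $\tfrac{m+1}{2m+n}$ etc.\ to produce the stated integer-type coefficients $m$, $m+n$ and $(2m+n)$.

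The main obstacle is purely the bookkeeping of this index shift: one must be careful that the term with $\tfrac{m}{2}c_{m-1}^2\widetilde{C}_{m-1}$ contributes to the $G_m$-coefficient both directly and via the re-expansion of its neighbors, and that no boundary anomaly occurs at $m=0$ (where the $\widetilde{C}_{-1}$ term is absent and the recurrence \eqnref{Gegen:recur} degenerates). Since the Gegenbauer polynomials form a complete orthogonal basis of $L^2\!\left([-1,1];(1-s^2)^{n/2-1/2}ds\right)$, equating coefficients is justified, and the convergence needed to rearrange the series termwise is guaranteed by the decay estimate \eqnref{Cm_tilde:uppperbound} together with the polynomial bounds \eqnref{gegen_maximum}. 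Thus the whole argument reduces to substitution, one application of the three-term recurrence, and collection of like terms, with the only real care required at the lowest index.
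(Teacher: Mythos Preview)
Your approach is correct and matches the paper's. The paper does not give a separate proof: it states just before the lemma that the relations follow ``by \eqnref{eq:final series expansion of u_1^t} and the Steklov boundary condition in \eqnref{eqn:Steklov-Dirichlet}'', citing \cite{Hong:2023:SDE:preprint} for the derivation of \eqnref{eq:final series expansion of u_1^t} itself. One clarification: the Gegenbauer three-term recurrence you describe is exactly what is used to pass from \eqnref{u_1^t seriesexp} to the second line of \eqnref{eq:final series expansion of u_1^t} (it absorbs the extra factor $(\cosh\xi_2-\cos\theta)$ coming from $h(\xi_2,\theta)^{-1}$), so once \eqnref{eq:final series expansion of u_1^t} is in hand the lemma is a direct coefficient comparison with no further use of the recurrence; your sketch bundles these two steps together, which is fine but slightly more than the paper asks for here.
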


\section{Approximation of $\sigma_1^t$ by the finite section method}\label{section:computation}

\subsection{Dirichlet-to-Neumann operator}\label{subsec:DtN}
Recall that we consider the domain $\Om=B_2\setminus\overline{B_1^t}\subset\RR^{n+2}$ with $\Gamma_D=\p B_1^t=\{\xi=\xi_1\}$ and $\Gamma_S=\p B_2=\{\xi=\xi_2\}$. 
Let $(\xi,\theta)$ be the first two components in the bispherical coordinates and set $s=\cos\theta$. 
Define
\begin{align}\notag
\tilde{g}_{-1}(s)&=0,\\
\tilde{g}_k(s)&=(\cosh\xi_2-s)^{\frac{n}{2}}\, G_k^{(n/2)}(s),\quad k\geq 0,
\end{align}
and
$$g_k(s):=\Big(\prod_{j=1}^k \frac{\sqrt{j}}{\sqrt{j+n-1}}\Big)\frac{1}{c_k}\,\tilde{g}_k(s),\quad k\geq 0.$$
Then $\{\tilde{g}_k(s)\}_{k\geq 0}$ is a complete orthogonal basis for $L^2\left([-1,1]; (1-s^2)^{n/2-1/2}(\cosh\xi_2-s)^{-n}\,ds\right)$.

One can rewrite \eqnref{eq:final series expansion of u_1^t} as
\beq\label{eq:compact series expansion of normal derivative of u_1^t}
\begin{cases}
\ds u_1^t\big|_{\p B_2}(x)=\sum_{k=0}^\infty \widetilde{C}_k \,\tilde{g}_k(s),\\[1mm]
\ds\frac{\partial u_1^t}{\partial {n}}\Big|_{\partial B_2}(x)
=\frac{1}{2\alpha}\sum_{k=0}^\infty  \widetilde{C}_k\Big[\!-(k+n-1)c_k^2\, \tilde{g}_{k-1}(s)\\
\hskip 2cm  + \left((2k+n)c_k^2\cosh \xi_2 - n \sinh \xi_2\right) \tilde{g}_k(s) 
- (k+1)c_k^2\, \tilde{g}_{k+1}(s)\Big].
\end{cases}
\eeq
The right-hand sides in these equations belong to $L^2\left([-1,1]; (1-s^2)^{n/2-1/2}(\cosh\xi_2-s)^{-n}\,ds\right)$ by \eqnref{Cm_tilde:uppperbound} and \eqnref{Gegen:norm}.
Since \eqnref{eq:final series expansion of u_1^t} is derived for $u_1^t$ satisfying $\Delta u=0$ in $\Om$ and $u=0$ on $\Gamma_D=\p B_1^t$, \eqnref{eq:compact series expansion of normal derivative of u_1^t} implies 
\begin{align}\label{eq:L:mapping} 
\notag &\mathcal{L}\left[ \tilde{g}_k(s)\right] =  \frac{1}{2\alpha} \Big[\!-(k+n-1)c_k^2\,\tilde{g}_{k-1}(s) \\
&\qquad \qquad \qquad  + \left((2k+n)c_k^2\cosh \xi_2 - n \sinh \xi_2\right) \tilde{g}_k(s) - (k+1)c_k^2\, \tilde{g}_{k+1}(s)\Big]
\end{align}
and, thus,
\beq \label{def:dm:wm}
\begin{gathered}
\mathcal{L}\left[ {g}_k(s)\right] =  
\frac{1}{2\alpha}d_k g_{k}(s)+\frac{1}{2\alpha}\left(-w_k c_{k-1}c_k g_{k-1}(s)
-w_{k+1}c_{k}c_{k+1}g_{k+1}(s)\right),\\
\notag
d_k=(n+2k)c_k^2\cosh \xi_2 -n\sinh\xi_2, \quad w_k = \sqrt{(k+n-1)k}.
\end{gathered}
\eeq

\subsection{The first eigenvalue of the finite section of $\mathcal{L}$}\label{subsec:finitesection}
We define the finite dimensional space
$$H_N:=\operatorname{span}\{{g}_0(s),{g}_1(s),\cdots,{g}_{N-1}(s)\}\quad\mbox{for each }N=1,2,\dots. $$
Set $Q_N$ to be the orthogonal projection onto $H_N$.
We define the inner product $(\cdot,\cdot)$ on $H_n$ by
\beq\label{inner:def:Hn}
(g_j,g_k)=\delta_{jk}\quad\mbox{for }j,k=0,1\dots,N-1,
\eeq
$\delta_{ij}$ being the Kronecker delta. 
By \eqnref{def:dm:wm}, one can identify the finite section $Q_N\mathcal{L}Q_N$ of $\mathcal{L}$ with respect to $\{g_0(s),\cdots,g_{N-1}(s)\}$ by the symmetric tridiagonal matrix 
$\mathbb{L}_N$ given by
\begin{gather} \label{def:M}
\ds \mathbb{L}_N=\frac{1}{2\alpha}\left(\mbox{diag}(d_0,\cdots,d_{N-1}) -\mathbb{T}_N\right)
\end{gather}
with
\beq\notag
\mathbb{T}_N
=\left(\begin{array}{ccccc}
0& w_1c_0c_1 &&&\\[1mm]
w_1c_0c_1 &0&w_2c_1c_2&&\\[1mm]
&w_2c_1c_2&0&\ddots&\\[1mm]
&&\ddots&\ddots&w_{N-1}c_{N-2}c_{N-1}\\[1mm]
&&&w_{N-1}c_{N-2}c_{N-1}&0
\end{array}\right).
\eeq

\smallskip
\smallskip
\begin{lemma}
The matrix $\mathbb{L}_N$ is symmetric positive definite.
\end{lemma}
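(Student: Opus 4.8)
The statement to prove is that the finite-dimensional matrix $\mathbb{L}_N$ is symmetric positive definite. Symmetry is immediate from the explicit form in \eqnref{def:M}: the diagonal part $\mbox{diag}(d_0,\dots,d_{N-1})$ is symmetric, and $\mathbb{T}_N$ is symmetric since its off-diagonal entries $w_jc_{j-1}c_j$ sit in positions $(j-1,j)$ and $(j,j-1)$ with the same value. So the real content is positive definiteness, and the cleanest route is to exploit that $\mathbb{L}_N$ represents the finite section $Q_N\mathcal{L}Q_N$ of the operator $\mathcal{L}$, which by the discussion after \eqnref{def:oper:L} is positive-definite and self-adjoint on $L^2(\Gamma_S)$.

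The plan is as follows. First I would record that, with the inner product \eqnref{inner:def:Hn} making $\{g_0,\dots,g_{N-1}\}$ orthonormal, $\mathbb{L}_N$ is exactly the matrix of $Q_N\mathcal{L}Q_N|_{H_N}$ in this basis; this uses \eqnref{def:dm:wm}, since applying $\mathcal{L}$ to $g_k$ and then projecting back onto $H_N$ kills the $g_{N}$ term that appears when $k=N-1$ but otherwise reproduces the tridiagonal pattern. Next, for a nonzero vector $v=(v_0,\dots,v_{N-1})^\top$, set $f=\sum_{k=0}^{N-1}v_k g_k\in H_N\setminus\{0\}$; then, because the $g_k$ are orthonormal in the $L^2(\Gamma_S)$-type inner product, $v^\top\mathbb{L}_N v=(Q_N\mathcal{L}Q_N f,f)=\langle \mathcal{L}f,f\rangle_{L^2(\Gamma_S)}$, and this is strictly positive by the positive-definiteness of $\mathcal{L}$ together with $f\neq 0$. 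That gives positive definiteness directly, without touching the entries $d_k$ or $w_k$ at all.

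An alternative, more elementary route avoids invoking the operator $\mathcal{L}$ and instead works with the entries. Here one would want to show the diagonal dominates: note $d_k=(n+2k)c_k^2\cosh\xi_2-n\sinh\xi_2$, and since $c_k^2=\coth((k+\tfrac n2)(\xi_1-\xi_2))>1$ by \eqnref{def:cm} and $\cosh\xi_2>\sinh\xi_2$, we get $d_k>(n+2k)\sinh\xi_2-n\sinh\xi_2=2k\sinh\xi_2\ge 0$, with $d_0>0$ strictly. One would then bound the off-diagonal sum $w_kc_{k-1}c_k+w_{k+1}c_kc_{k+1}$ in row $k$ against $d_k$. This is the step I expect to be the genuine obstacle: strict diagonal dominance for every row and general $n$ is not obviously true and would require delicate estimates on $c_k$ and $w_k=\sqrt{(k+n-1)k}$, so for a short self-contained proof the operator-theoretic argument of the previous paragraph is preferable. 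I would therefore present the operator-theoretic proof as the main argument, remarking only briefly that symmetry is read off from \eqnref{def:M}.

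Concretely, the write-up would be: (i) symmetry from the explicit matrix; (ii) identification of $\mathbb{L}_N$ with $Q_N\mathcal{L}Q_N$ restricted to $H_N$ via \eqnref{def:dm:wm} and the orthonormality \eqnref{inner:def:Hn}; (iii) for $v\neq 0$, write the quadratic form as $\langle\mathcal{L}f,f\rangle_{L^2(\Gamma_S)}$ with $f\in H_N\setminus\{0\}$ and conclude positivity from the positive-definiteness of $\mathcal{L}$. The only mild care needed is to be explicit that the $L^2(\Gamma_S)$ inner product, when restricted to $H_N$ and expressed in the basis $\{g_k\}$, is the standard one \eqnref{inner:def:Hn} — this is precisely why the $g_k$ were normalized as they were — so that $v^\top\mathbb{L}_N v$ and $\langle\mathcal{L}f,f\rangle_{L^2(\Gamma_S)}$ agree.
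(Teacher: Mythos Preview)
Your operator-theoretic argument hinges on the claim that the inner product $(\cdot,\cdot)$ declared in \eqnref{inner:def:Hn} coincides with the $L^2(\Gamma_S)$ inner product on $H_N$, i.e.\ that $\{g_k\}$ is $L^2(\Gamma_S)$-orthonormal. This is false. Using the surface element \eqnref{surint:B2} and $s=\cos\theta$, one finds for functions of $s$ alone that
\[
\langle \tilde g_j,\tilde g_k\rangle_{L^2(\Gamma_S)}
\;\propto\;\int_{-1}^{1} G_j^{(n/2)}(s)\,G_k^{(n/2)}(s)\,\frac{(1-s^2)^{(n-1)/2}}{\cosh\xi_2-s}\,ds,
\]
and the extra factor $(\cosh\xi_2-s)^{-1}$ spoils the Gegenbauer orthogonality; the $\tilde g_k$ (hence the $g_k$) are \emph{not} orthogonal in $L^2(\Gamma_S)$. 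The normalization passing from $\tilde g_k$ to $g_k$ was chosen precisely to make the matrix \eqnref{def:M} symmetric (compare \eqnref{eq:L:mapping} with \eqnref{def:dm:wm}), not to produce $L^2(\Gamma_S)$-unit vectors. Consequently the equality you write, $v^\top\mathbb{L}_N v=\langle \mathcal{L}f,f\rangle_{L^2(\Gamma_S)}$, does not hold; what is true is only $v^\top\mathbb{L}_N v=(\mathcal{L}f,f)$ in the abstract inner product \eqnref{inner:def:Hn}, and the positive-definiteness of $\mathcal{L}$ on $L^2(\Gamma_S)$ gives no direct information about that quantity. Indeed, the paper uses the implication in the \emph{opposite} direction just after the lemma: it first proves $\mathbb{L}_N$ is positive definite and only then concludes that $Q_N\mathcal{L}Q_N$ is positive-definite with respect to $(\cdot,\cdot)$.

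The paper's proof is entirely elementary and entry-based: it shows that every leading principal minor $\det(\mathbb{L}_m)$ is positive by induction on $m$, using the tridiagonal three-term recurrence \eqnref{Lm:recur} for these determinants together with the lower bound $d_m>c_m^2\big(me^{\xi_2}+(m+n)e^{-\xi_2}\big)$ from \eqnref{d_m:inequal}. If you wish to rescue an operator-style argument, you would need to exhibit a genuine positive weight $\rho$ with $(\,\cdot\,,\,\cdot\,)=\langle\,\cdot\,,\rho\,\cdot\,\rangle_{L^2(\Gamma_S)}$ and then argue via $\langle \mathcal{L}f,\rho f\rangle_{L^2(\Gamma_S)}$, but since the $g_k$ are not even $L^2(\Gamma_S)$-orthogonal, no multiplication operator can do this; the abstract inner product \eqnref{inner:def:Hn} is not of that form.
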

\begin{proof}
 It is sufficient to verify that 
\beq\label{eig:positive:proof}
\det (\mathbb{L}_m)>0\quad\mbox{for all }m=1,2,\dots.
\eeq
We set $\det( \mathbb{L}_{0})=1$ for convenience. We prove \eqnref{eig:positive:proof} by induction on $m$. 

By expanding $\det \mathbb{L}_m$ in terms of the cofactors (see \eqref{def:M}), the recursive formula follows:
\begin{align}
\notag\ds\det (\mathbb{L}_1)&=\frac{1}{2\alpha}d_0,\\ \label{Lm:recur}
\ds\det (\mathbb{L}_{m+1})&=\frac{1}{2\alpha}d_m\det (\mathbb{L}_{m})-\frac{1}{(2\alpha)^2}(m+n-1)mc_{m-1}^2c_{m}^2\det( \mathbb{L}_{m-1}),\quad m\geq 1.
\end{align}
Since $\xi_2>0$ and $c_{m}>1$, we have
\beq\label{d_m:inequal}
\begin{aligned}
d_m&>(2m+n)c_{m}^2\cosh\xi_2-nc_m^2\sinh\xi_2=c_m^2\left(me^{\xi_2}+(m+n)e^{-\xi_2}\right),\quad m\geq 0.
\end{aligned}
\eeq
In particular, it holds by letting $m=0$ that $\det (\mathbb{L}_1)>0.$

Now, assume that $\det (\mathbb{L}_k)>0$ for all $k=0,1,\dots,m$. 
By \eqnref{Lm:recur} and \eqnref{d_m:inequal}, we obtain
\begin{align*}\notag
&\det(\mathbb{L}_{m+1})\\
>&\, \frac{m}{2\alpha}c_m^2e^{\xi_2}\det (\mathbb{L}_{m})-\frac{1}{(2\alpha)^2}w_m^2c_{m-1}^2c_{m}^2\det( \mathbb{L}_{m-1})\\
>&\,\frac{m}{2\alpha}c_m^2 e^{\xi_2}\left[ \frac{c_{m-1}^2}{2\alpha} \left((m-1)e^{\xi_2} +(m+n-1)e^{-\xi_2}\right)\det( \mathbb{L}_{m-1})
-\frac{1}{(2\alpha)^2}w_{m-1}^2 c_{m-2}^2c_{m-1}^2\det( \mathbb{L}_{m-2})\right]\\
&\,-\frac{1}{(2\alpha)^2}(m+n-1)mc_{m-1}^2c_{m}^2\det( \mathbb{L}_{m-1})\\
=&\,\frac{m}{2\alpha}c_m^2 e^{\xi_2}\left(\frac{c_{m-1}^2}{2\alpha}(m-1)e^{\xi_2}\det (\mathbb{L}_{m-1})-\frac{1}{(2\alpha)^2}w_{m-1}^2 c_{m-2}^2c_{m-1}^2\det (\mathbb{L}_{m-2})\right).
\end{align*}
By induction, it follows that
\begin{align*}
\det(\mathbb{L}_{m+1})
>&\,  \left(\prod_{k=2}^m\frac{m}{2\alpha}c_m^2 e^{\xi_2}\right)
\left(\frac{c_{1}^2}{2\alpha}e^{\xi_2}\det (\mathbb{L}_{1})-\frac{1}{(2\alpha)^2}w_{1}^2 c_{0}^2c_{1}^2\det (\mathbb{L}_{0})\right)\\
>&\,   \left(\prod_{k=2}^m\frac{m}{2\alpha}c_m^2 e^{\xi_2}\right)\frac{c_1^2}{(2\alpha)^2} \left(d_0-w_1^2c_0^2\right)>0.
\end{align*}
Hence, we conclude that \eqnref{eig:positive:proof} holds. 
\end{proof}

\begin{definition}\label{def:trun:u}
We denote by $\sigma_{1,N}^t$ the first (smallest) eigenvalue of $\mathbb{L}_N$ and by $(\widetilde{C}^{(N)}_{0}, \widetilde{C}^{(N)}_1,\dots,\widetilde{C}^{(N)}_{N-1})$ the corresponding eigenvector of $\mathbb{L}_N$. We define 
$$C^{(N)}_{k}=\frac{\widetilde{C}^{(N)}_k }{ e^{(k+\frac{n}{2})(2\xi_1-\xi_2)}-e^{(k+\frac{n}{2})\xi_2}}$$ and
\begin{align}\label{def:u1N}
u_{1,N}^t\left({x}\right)=(\cosh\xi-\cos\theta)^{\frac{n}{2}}\,\sum_{k={0}}^{N-1} & C^{(N)}_{k} \left(e^{(k+\frac{n}{2})(2\xi_1-\xi)}-e^{(k+\frac{n}{2})\xi}\right) G_{k}^{\left(n/2\right)}(\cos\theta).
\end{align}
\end{definition}

\begin{lemma} \label{lemma:num:eig:decrease}
	For each fixed $t$, $(\sigma_{1,N}^t)$ is a sequence of positive numbers that monotonically decreases with respect to $N$.
\end{lemma}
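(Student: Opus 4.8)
The plan is to exploit the fact that $\mathbb{L}_N$ is the finite section $Q_N \mathcal{L} Q_N$ of the positive self-adjoint operator $\mathcal{L}$, so the whole claim reduces to a standard Rayleigh-quotient monotonicity argument on nested finite-dimensional subspaces. First I would record the variational characterization of the smallest eigenvalue of a symmetric positive definite matrix: since $\mathbb{L}_N$ acts on $H_N$ equipped with the inner product \eqnref{inner:def:Hn} making $\{g_0,\dots,g_{N-1}\}$ orthonormal, we have
\[
\sigma_{1,N}^t = \min_{0 \ne v \in H_N} \frac{(\mathbb{L}_N v,\, v)}{(v,\,v)} = \min_{0 \ne v \in H_N} \frac{(\mathcal{L} v,\, v)}{(v,\,v)},
\]
where the last equality uses that for $v \in H_N$ one has $(\mathbb{L}_N v, v) = (Q_N \mathcal{L} Q_N v, v) = (\mathcal{L} v, Q_N v) = (\mathcal{L} v, v)$ because $Q_N$ is self-adjoint and fixes $H_N$. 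Positivity of all $\sigma_{1,N}^t$ is then immediate from the previous lemma (the determinant/induction argument already shows $\mathbb{L}_N$ is positive definite), or directly from positive-definiteness of $\mathcal{L}$.

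Next, the monotonicity follows from the inclusion $H_N \subset H_{N+1}$, which holds by construction since $H_{N+1} = \operatorname{span}\{g_0,\dots,g_N\}$ contains all of $\operatorname{span}\{g_0,\dots,g_{N-1}\} = H_N$, and crucially the inner product \eqnref{inner:def:Hn} on $H_{N+1}$ restricts to the one on $H_N$ (both are just the $L^2$ inner product read off in the orthonormal basis, so there is no inconsistency). Minimizing the Rayleigh quotient $\frac{(\mathcal{L} v, v)}{(v,v)}$ over the larger set $H_{N+1}$ can only decrease the infimum:
\[
\sigma_{1,N+1}^t = \min_{0 \ne v \in H_{N+1}} \frac{(\mathcal{L} v, v)}{(v,v)} \le \min_{0 \ne v \in H_N} \frac{(\mathcal{L} v, v)}{(v,v)} = \sigma_{1,N}^t.
\]
This yields the claimed monotone decrease.

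The only genuinely delicate point — and the step I would be most careful about — is confirming the identity $(\mathbb{L}_N v, v) = (\mathcal{L} v, v)$ for $v \in H_N$, i.e. that the tridiagonal matrix $\mathbb{L}_N$ really represents $Q_N \mathcal{L} Q_N$ and not something with spurious off-block contributions. This is exactly what \eqnref{def:dm:wm} provides: $\mathcal{L}[g_k]$ has components only along $g_{k-1}, g_k, g_{k+1}$ with the stated symmetric coefficients, so for $j, k \le N-1$ the matrix entry $(\mathcal{L} g_k, g_j)$ coincides with the $(j,k)$ entry of $\mathbb{L}_N$ as written in \eqnref{def:M}, and the projection $Q_N$ simply discards the single $g_N$ term appearing in $\mathcal{L}[g_{N-1}]$. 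Once this bookkeeping is in place, everything else is the textbook min-max comparison; I would present it in two or three lines and cite the positive-definiteness lemma for the positivity half of the statement.
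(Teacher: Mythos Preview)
Your argument is correct and is the standard, clean way to see non-strict monotonicity: identifying $\mathbb{L}_N$ with $Q_N\mathcal{L}Q_N$ via \eqnref{def:dm:wm} and then comparing Rayleigh quotients over the nested chain $H_N\subset H_{N+1}$ immediately yields $\sigma_{1,N+1}^t\le\sigma_{1,N}^t$. The bookkeeping you flag (that the projection $Q_N$ only kills the single $g_N$-component of $\mathcal{L}[g_{N-1}]$, and that the inner products on $H_N$ and $H_{N+1}$ are compatible) is exactly right.

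The paper, however, takes a different and more hands-on route: it works with the characteristic polynomials $p_m(\lambda)=\det(\mathbb{L}_m-\lambda\mathbb{I}_m)$, uses the tridiagonal cofactor recursion to show $p_{m+2}(\sigma_{1,m+1}^t)<0$ whenever $\sigma_{1,m+1}^t<\sigma_{1,m}^t$, and then concludes by the intermediate value theorem (since $p_{m+2}(0)=\det\mathbb{L}_{m+2}>0$) that $\sigma_{1,m+2}^t<\sigma_{1,m+1}^t$. This is essentially a Sturm-sequence/interlacing argument carried out by hand, and it buys something your variational argument does not: \emph{strict} monotonicity $\sigma_{1,N+1}^t<\sigma_{1,N}^t$. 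Your Rayleigh-quotient comparison only gives $\le$; to upgrade it to $<$ you would need an extra observation, for instance that the off-diagonal entries $w_kc_{k-1}c_k$ of $\mathbb{L}_{N+1}$ are all nonzero, so any eigenvector of $\mathbb{L}_{N+1}$ must have a nonzero last component and hence cannot lie in $H_N$. The strict inequality matters downstream: it is what ultimately supplies the strict lower bound $0<\sigma_{1,N}^t-\sigma_1^t$ in Theorem~\ref{thm:sigma:decay}, via $\sigma_1^t\le\sigma_{1,N+1}^t<\sigma_{1,N}^t$. So your approach is more conceptual and shorter, but as written it proves a slightly weaker statement than the paper establishes and later uses.
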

\begin{proof} 
We show $\sigma_{1,m+1}^t<\sigma_{1,m}^t$ by induction on $m \in \N$. Define a function
$$p_m(\lambda):=\det\left (\mathbb{L}_m-\lambda \mathbb{I}_m\right),\quad \lambda\in\RR,$$
where $\mathbb{I}_m$ is the $m\times m$ identity matrix. 
We note that $\sigma_{1,m}^t$ is the smallest positive solution to $p_m(\lambda)=0$. In particular, 
\beq\label{pm:zero}
p_m(\sigma_{1,m}^t)=0\quad\mbox{for each }m.
\eeq
Since $p_m(0)=\mbox{det}(\mathbb{L}_m)>0$ by \eqnref{eig:positive:proof}, 
the intermediate value theorem implies that for each $m$, 
\begin{align} \label{p_m:cond}
&p_m(\lambda)>0\quad\mbox{for all }0<\lambda<\sigma_{1,m}^t.
\end{align}
Also, by the cofactor expansion of $\mathbb{L}_m-\lambda \mathbb{I}_m$, the following recursive relation holds:
\begin{align*}
p_2(\lambda)&=\Big(\frac{1}{2\alpha}\left((n+2)\cosh\xi_2\cdot c_1^2-n\sinh\xi_2\Big)-\lambda\right)p_1(\lambda)-\frac{n}{(2\alpha)^2}(c_0c_1)^2,\\ 
\ds p_{m+2}(\lambda)&=\Big(\frac{1}{2\alpha}\left((2m+n+2)\cosh\xi_2\cdot c_{m+1}^2-n\sinh\xi_2\Big)-\lambda\right)p_{m+1}(\lambda)\\
&  \quad -\frac{1}{(2\alpha)^2}(m+n)(m+1)c_{m}^2c_{m+1}^2 p_{m}(\lambda),\quad m\geq 1. 
\end{align*}
By applying \eqnref{pm:zero}, we obtain
\begin{align}\label{eq:p2:ineq}
p_2\left(\sigma_{1,1}^t\right)&=- \frac{n}{(2\alpha)^2}c_0^2c_1^2<0,\\
\label{eq:p_m:subst}
p_{m+2}\left(\sigma_{1,m+1}^t\right)&=-\frac{1}{(2\alpha)^2}(m+n)(m+1)c_{m+1}^2c_m^2p_m(\lambda)\quad\mbox{for }m\geq1.
\end{align}
By \eqref{p_m:cond}  and the fact that $p_2(\sigma_{1,2}^t)=0$, we deduce that
$\sigma_{1,2}^t<\sigma_{1,1}^t.$ 
In the same way, assuming $\sigma_{1,m+1}^t<\sigma_{1,m}^t$, it holds that $\sigma_{1,m+2}^t<\sigma_{1,m+1}^t$.
Therefore, by induction, we complete the proof.
\end{proof}

 \subsection{Proofs of main theorems}\label{subsec:proof:maintheorem}
 The operator $Q_N \mathcal{L} Q_N$ is identical to the finite dimensional matrix $\mathbb{L}_N$ with respect to the basis $\{g_j(s)\}_{j=0}^{N-1}$. 
 Since $\mathbb{L}_N$ is symmetric positive-definite,  $Q_N \mathcal{L} Q_N$ is a positive-definite symmetric operator on $H_N$ with respect to the inner product $(\cdot,\cdot)$ defined by \eqnref{inner:def:Hn}. The first eigenvalue $\sigma_{1,N}^t>0$ of $\mathbb{L}_N$ is the first eigenvalue of $Q_N \mathcal{L} Q_N$ on $H_N$ and admits a variational characterization similar to \eqnref{variational characterization}:
\begin{align}\label{vari:sigma1N}
\sigma_{1,N}^t&=\inf\left\{\frac{\left(Q_N \mathcal{L} Q_N v,\,v\right)}{(v,\,v)}\,:\,v\in H_N\setminus\{0\}\right\}.
\end{align}

We derive upper and lower bounds of $\sigma_1^t$ in terms of the first eigenvalue and the first eigenfunction of $Q_N\mathcal{L}Q_n$ by using the two variational characterizations \eqnref{variational characterization} and \eqnref{vari:sigma1N} as in the main theorem in the introduction. 

\smallskip 
\begin{lemma}\label{lemma:sigma:conv} 
Let $\Om$ be given by \eqnref{spheri_shell}. 
For some $\delta,C,N_0>0$ independent of $N$, it holds that
\beq\label{eqn:sigma:decay}
\sigma_{1,N}^t-\sigma_1^t\leq C e^{-N\delta}\quad\mbox{for all }N\geq N_0.
\eeq
\end{lemma}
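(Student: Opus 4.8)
\textbf{Proof proposal for Lemma \ref{lemma:sigma:conv}.}

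The plan is to exploit the two variational characterizations \eqnref{variational characterization} and \eqnref{vari:sigma1N}, using a carefully chosen test function built from the first eigenvector of $\mathbb{L}_N$. Given the normalized first eigenvector $(\widetilde{C}^{(N)}_0,\dots,\widetilde{C}^{(N)}_{N-1})$ of $\mathbb{L}_N$, I would form the function $v_N:=\sum_{k=0}^{N-1}\widetilde{C}^{(N)}_k\,g_k(s)\in H_N$, so that by \eqnref{vari:sigma1N} one has $\sigma_{1,N}^t=(Q_N\mathcal{L}Q_N v_N,v_N)/(v_N,v_N)=(\mathcal{L}v_N,v_N)/(v_N,v_N)$, the last equality because $v_N\in H_N$ and $Q_N$ is the orthogonal projection onto $H_N$. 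The harmonic extension of $v_N$ into $\Om$ (vanishing on $\Gamma_D$) is exactly $u^t_{1,N}$ from Definition \ref{def:trun:u} up to the normalization constants, and the pairing $(\mathcal{L}v_N,v_N)$ equals $\|\nabla u^t_{1,N}\|_{L^2(\Om)}^2$ while $(v_N,v_N)$ relates to $\|u^t_{1,N}\|_{L^2(\Gamma_S)}^2$ — here one must be slightly careful because the inner product \eqnref{inner:def:Hn} on $H_N$ is the discrete $\ell^2$ inner product on coefficients, not literally the weighted $L^2(\Gamma_S)$ inner product, so I would first record the conversion factor between $\|\tilde g_k\|_{L^2(\Gamma_S)}$ and the normalization of $g_k$, which is explicit from the definitions of $g_k$ and \eqnref{Gegen:norm}.

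The key point is then to compare $u^t_{1,N}$ against the true eigenfunction $u^t_1$, whose coefficients $\widetilde{C}_m$ satisfy the decay \eqnref{Cm_tilde:uppperbound}. I would argue that the truncated eigenvector of $\mathbb{L}_N$ is exponentially close (in coefficient space) to the truncation of the true sequence $(\widetilde{C}_m)$: since $\mathbb{L}_N$ is the finite section of a bounded self-adjoint operator with exponentially decaying off-diagonal influence, and since the true sequence nearly satisfies the finite recursion of Lemma \ref{lemm: recursive} up to a tail of size $O(e^{-N\delta/2})$ by \eqnref{Cm_tilde:uppperbound}, a standard perturbation/resolvent estimate (using the spectral gap \eqnref{first:simple}, which passes to $Q_N\mathcal{L}Q_N$ for $N$ large by Lemma \ref{lemma:num:eig:decrease} and the min-max principle) gives $\|v_N-Q_N u^t_1\|\le Ce^{-N\delta'}$ for some $\delta'>0$. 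Plugging the test function $Q_N u^t_1$ (equivalently its harmonic extension, which lies in $H^1_\diamond(\Om)$) into \eqnref{variational characterization} and comparing Rayleigh quotients, the numerator and denominator each differ from those of $u^t_1$ by $O(e^{-N\delta'})$, whence $\sigma_{1,N}^t\le \sigma_1^t + Ce^{-N\delta}$.

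An alternative, cleaner route — and probably the one actually intended, given the appearance of $u^t_{1,m}$ in Theorem \ref{prop:computation:validation:eq} — is to reverse the roles: use the truncation $\sum_{m=0}^{N-1}\widetilde{C}_m g_m(s)$ of the \emph{true} eigenfunction as a test function in \eqnref{vari:sigma1N}. Then $\sigma_{1,N}^t\le (\mathcal{L}w,w)/(w,w)$ with $w=\sum_{m<N}\widetilde C_m g_m$, and the right-hand side differs from $(\mathcal{L}u^t_1,u^t_1)/(u^t_1,u^t_1)=\sigma_1^t$ by exactly the contribution of the tail $m\ge N$, which by \eqnref{Cm_tilde:uppperbound}, \eqnref{gegen_maximum}, \eqnref{Gegen:norm}, and the explicit tridiagonal action \eqnref{def:dm:wm} (note $c_k\to1$ and $d_k=O(k)$) is bounded by $C e^{-N\delta}$; one small subtlety is that cutting off introduces a boundary term at index $N$ from the tridiagonal coupling $w_N c_{N-1}c_N$, but $w_N=O(N)$ is only polynomial and is dominated by the exponential tail. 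This immediately yields \eqnref{eqn:sigma:decay}. The main obstacle in either approach is the bookkeeping that translates the discrete $\ell^2$ estimate on coefficients into the $H^1(\Om)$/$L^2(\Gamma_S)$ Rayleigh quotient cleanly, together with tracking the polynomial factors $w_N$, $m^k$ from \eqnref{gegen_maximum}–\eqnref{Gegen:norm} and confirming they never overpower the exponential decay $e^{-(m+n/2)\delta/2}$ of \eqnref{Cm_tilde:uppperbound}; once that is set up, the estimate is a short tail bound.
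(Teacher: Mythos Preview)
Your second (``alternative, cleaner'') route is exactly what the paper does, and your first approach via eigenvector perturbation and the spectral gap \eqnref{first:simple} is unnecessary. Concretely: take $v=Q_N u_1^t$ in \eqnref{vari:sigma1N} and use $\mathcal{L}u_1^t=\sigma_1^t u_1^t$ to write
\[
\sigma_{1,N}^t-\sigma_1^t\;\le\;\frac{\big(Q_N\mathcal{L}[Q_Nu_1^t-u_1^t],\,Q_Nu_1^t\big)}{\big(Q_Nu_1^t,\,Q_Nu_1^t\big)}.
\]
Because $\mathcal{L}$ is tridiagonal in the $g_k$-basis \eqnref{def:dm:wm}, the numerator collapses to the single boundary coupling term
\[
\frac{1}{2\alpha}\,\widetilde{C}_{N-1}\widetilde{C}_N\,(N+n-1)\,c_N^2c_{N-1}^2\prod_{j=1}^{N-1}\frac{j+n-1}{j},
\]
while the denominator is bounded below by $\widetilde{C}_0^2c_0^2$. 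The product $\prod_{j=1}^{N-1}\frac{j+n-1}{j}=O(N^{n-1})$ and the factor $N+n-1$ are only polynomial in $N$ and are absorbed by the exponential decay \eqnref{Cm_tilde:uppperbound} of $\widetilde{C}_{N-1}\widetilde{C}_N$, giving \eqnref{eqn:sigma:decay} directly.

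Your concern about translating the discrete $\ell^2$ inner product \eqnref{inner:def:Hn} into $L^2(\Gamma_S)$ or $H^1(\Om)$ quantities is misplaced for this lemma: the entire argument stays in the discrete inner product on coefficient sequences, and neither \eqnref{gegen_maximum}, \eqnref{Gegen:norm}, nor any $H^1/L^2$ Rayleigh quotient is invoked. That conversion only becomes relevant later, in the proofs of Theorems \ref{prop:computation:validation:eq} and \ref{thm:eigenfunction}.
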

\begin{proof}
Let $u_1^t$ be the eigenfunction corresponding to the first eigenvalue $\sigma_1^t$. 
By \eqnref{vari:sigma1N} with $v=Q_N u_1^t$, we obtain
$$
\sigma^t_{1,N}
\le\frac{\left(Q_N \mathcal{L} Q_N u_{1}^t,\, Q_Nu_{1}^t\right)}{(Q_Nu_{1}^t,\,Q_Nu_{1}^t)}.
$$
Since $\mathcal{L}u_1^t=\sigma_1^t u_1^t$, we derive
\begin{align}
\notag\ds\sigma_{1,N}^t-\sigma_1^t&\le \frac{\left(Q_N \mathcal{L} Q_N u_{1}^t,\, Q_Nu_{1}^t\right)}{(Q_Nu_{1}^t,\,Q_Nu_{1}^t)}- \sigma_1^t  \frac{\left( Q_N u_{1}^t,\, Q_Nu_{1}^t\right)}{(Q_Nu_{1}^t,\,Q_Nu_{1}^t)}
=\frac{\left(Q_N \mathcal{L} [Q_N u_{1}^t- u_{1}^t],\, Q_Nu_{1}^t\right)}{\left(Q_Nu_{1}^t,\,Q_Nu_{1}^t\right)}.
\end{align}
Using \eqnref{eq:compact series expansion of normal derivative of u_1^t}, \eqref{eq:L:mapping}, and \eqref{inner:def:Hn}, we compute
\begin{align}
\label{diff:numer}\ds\left(Q_N \mathcal{L} [Q_N u_{1}^t- u_{1}^t],\,  Q_Nu_{1}^t\right)
&= \frac{1}{2\alpha}\widetilde{C}_{N-1}\widetilde{C}_{N} \, (N+n-1) c_{N}^2 c_{N-1}^2 \Big(\prod_{j=1}^{N-1}\frac{j+n-1}{j}\Big),\\
\label{diff:denom}\ds\left(Q_Nu_{1}^t,\,Q_Nu_{1}^t\right)&
=\sum_{m=0}^{N-1} {\widetilde{C}_m}^2 c_m^2  \Big(\prod_{j=1}^{m}\frac{j+n-1}{j}\Big)\ge \widetilde{C}_0^2 c_0^2.
\end{align}
In view of \eqref{Cm_tilde:uppperbound} and \eqnref{def:cm}, we observe that, for some constants $K>0$ and $\delta>0$, 
\begin{align*}
|\widetilde{C}_m| &\leq K e^{-\left(m+\frac{n}{2}\right)\delta},\quad
c_m^2 \leq c_0^2 \quad\mbox{for all } m \in \N.
\end{align*}
From \eqref{diff:numer} and \eqref{diff:denom}, it holds that
\begin{align}\notag
\sigma_{1,N}^t-\sigma_1^t& \, \le\, \frac{K^2}{2\alpha} \frac{c_0^2}{\widetilde{C}_0^2}(N+n-1) \exp\Big(-(2N+n-1)\delta + \sum_{j=1}^{N-1}\frac{n-1}{j} \Big)\\ \label{diff:bound}
&\,\le\, 
\frac{K^2}{2\alpha} \frac{c_0^2}{\widetilde{C}_0^2}(N+n-1) \exp\left(-2N\delta+(n-1)(\ln N+1)\right).
\end{align}
This proves the theorem. 
\qed

By Lemma \ref{lemma:num:eig:decrease} and letting $N\rightarrow\infty$ in \eqnref{eqn:sigma:decay}, we derive the following.
\begin{cor}
For fixed $t$, we have
\beq\label{bounds}
\lim_{N\to\infty}\sigma_{1,N}^t\le \sigma_1^t.
\eeq
\end{cor}

\smallskip

\noindent{\bf Proof of Theorem \ref{prop:computation:validation:eq}.}
Fix $m\in\mathbb{N}$. Let $u_{1,m}^t$ be given by \eqnref{def:u1N} for any $m$. We have
\beq\label{bounds:by:u_1m}
 \sigma_1^t\le \left({\ds \int_{\partial \Om}\left|u_{1,m}^t\right|^2\,dS}\right)^{-1}{\ds\int_{\Om}\left|\nabla u_{1,m}^t\right|^2\, dx}
\eeq
as an immediate result of the variational characterization \eqnref{variational characterization} since $u_{1,m}^t \in H^1(\Om)\setminus\{0\}$ and $u_{1,m}^t\big|_{\partial B_1^t}=0$ for each $m$.

We derive further inequalities by investigating the function $u_{1,m}^t$. 
Note that $u_{1,m}^t$ satisfies the following slightly modified equation \eqnref{eqn:Steklov-Dirichlet}:
\beq\label{eq:trunc:eig:bvp}
\begin{cases}
\ds\Delta u_{1,m}^t=0\quad&\mbox{in }B_2\backslash\overline{B_1^t},\\
\ds u_{1,m}^t=0\quad&\mbox{on }\p B_1^t,\\
\ds\frac{\p u_{1,m}^t}{\p n}=\sigma_{1,m}^t u_{1,m}^t + f_m^t\quad&\mbox{on }\p B_2
\end{cases}
\eeq
with 
\beq\label{def:fmt}
f_m^t:=\frac{\p u_{1,m}^t}{\p n}\big|_{\p B_2} - \sigma_{1,m}^t u_{1,m}^t\big|_{\p B_2}.
\eeq

By \eqnref{def:u1N}, we have
\begin{align}\label{expan:u1m}
\begin{cases}
\ds u_{1,m}^t\big|_{\p B_2}=(\cosh\xi_2-\cos\theta)^{\frac{n}{2}}\sum_{k=0}^{m-1} \widetilde{C}_{m,k} G_k^{n/2}(\cos\theta),\\
\ds \frac{\p u_{1,m}^t}{\p n}\Big|_{\p B_2}
=-\frac{(\cosh\xi_2-\cos\theta)^{\frac{n}{2}}}{2\alpha}
\sum_{k=0}^{m} \,\Big({n\sinh\xi_2}\,\widetilde{C}_{m,k}-\cosh\xi_2\,(2k+n)c_k^2\widetilde{C}_{m,k}
\\\hskip 6cm +k c_{k-1}^2\widetilde{C}_{m,k-1}
+(k+n)c_{k+1}^2\widetilde{C}_{m,k+1}\Big)G_k^{(n/2)}(\cos\theta),
\end{cases}
\end{align}
where, for simplicity, we set
 $\widetilde{C}_{m,-1}=\widetilde{C}_{m,m}=\widetilde{C}_{m,m+1}=0.$
 Because $(\widetilde{C}_{m,0}, \widetilde{C}_{m,1},\dots,\widetilde{C}_{m,m-1})$ is an eigenvector of $\mathbb{L}_m$ corresponding to $\sigma_{1,m}^t$, one can easily find that
\begin{align}\notag
f_m^t&=-\frac{1}{2\alpha}mc_{m-1}^2\widetilde{C}_{m,m-1}\tilde{g}_m(s)\\   \label{eq:f:correction}
&=-\frac{(\cosh\xi_2 - \cos\theta)^{\frac{n}{2}}}{\alpha}\frac{m}{2}c_{m-1}^2\widetilde{C}_{m,m-1}G_m^{(n/2)}(\cos\theta).
\end{align}
We will mainly use the expression \eqnref{eq:f:correction} to prove the assertion \eqnref{bounds:stronger}.

The weak form of boundary value problem \eqnref{eq:trunc:eig:bvp} is
$$\int_\Om \nabla u_{1,m}^t\cdot\nabla v=\int_{\p B_2} \left(\sigma_{1,m}^t u_{1,m}^t + f_m^t\right)v$$
for all $v\in H^1(\Om)$ such that $v=0$ on $\p B_1^t$.
Substituting $v=u_{1,m}^t$ in the weak form gives
\beq\label{eq:sigma:truceigftn}
\sigma_{1,m}^t {\ds \int_{\p B_2}| u_{1,m}^t|^2}= {\ds \int_\Om|\nabla u_{1,m}^t|^2 - \int_{\p B_2}u_{1,m}^t f_m^t}.
\eeq
On the other hand, it follows from \eqnref{def:fmt} that
\beq\label{eq:obvious:intupf}
\int_{\p B_2}\frac{\p u_{1,m}^t}{\p n}\,f_m^t=\sigma_{1,m}^t \int_{\p B_2}u_{1,m}^tf_m^t + \int_{\p B_2} (f_m^t)^2.
\eeq
Also, it is straightforward from \eqnref{expan:u1m} to have 
 \begin{align}\notag
\frac{\p u_{1,m}^t}{\p n}\Big|_{\p B_2}
+\frac{n\sinh\xi_2}{2\alpha}u_{1,m}^t\big|_{\p B_2}=\frac{(\cosh \xi_2-\cos \theta)^{\frac{n}{2}+1}}{2\alpha}
\,\sum_{k=0}^{m-1}(2k+n)c_k^2\, \widetilde{C}_{m,k}\,G_k^{\left(n/2\right)}(\cos \theta).
\end{align}
Applying \eqnref{surint:B2} and \eqnref{eq:f:correction} to this relation, we derive an integral alternative to \eqnref{eq:obvious:intupf} as 
$$
\int_{\p B_2}\frac{\p u_{1,m}^t}{\p n}f_m^t
=-\frac{n\sinh\xi_2}{2\alpha}\int_{\p B_2} u_{1,m}^t f_m^t+\frac{\alpha^{n-1}}{4}mc_{m-1}^2\widetilde{C}_{m,m-1}\sum_{k=0}^{m-1}(2k+n)c_k^2\,\widetilde{C}_{m,k}I_k,
$$
where, for each $k=0,1,\dots,m-1$,
\begin{align*}
I_k:=\int_0^\pi\cdots\int_0^\pi\int_{0}^{2\pi} G_m^{(n/2)}(\cos\theta)G_k^{(n/2)}(\cos\theta)\sin^n\theta\prod_{j=1}^n\left(\sin^{n-j}\varphi_j\right)d\theta d\varphi_1\cdots d\varphi_n.
\end{align*}
From the orthogonality of Gegenbauer polynomials, we have
$I_k=0$ for all $k=0,1,\dots,m-1$.
Combining this with \eqnref{eq:obvious:intupf}, we obtain
\beq\label{eq:sign:intuf}
\left(\sigma_{1,m}^t+\frac{n\sinh\xi_2}{2\alpha}\right) \int_{\p B_2}u_{1,m}^t f_m^t = -{\ds \int_{\p B_2}(f_m^t)^2}\le 0.
\eeq
Since $(\sigma_{1,m}^t+\frac{n\sinh\xi_2}{2\alpha})>0$, it holds from \eqnref{eq:sigma:truceigftn} and \eqnref{eq:sign:intuf} that
\beq\notag
 \sigma_{1,m}^t\ge \left({\ds \int_{\partial \Om}\left|u_{1,m}^t\right|^2\,dS}\right)^{-1}{\ds\int_{\Om}\left|\nabla u_{1,m}^t\right|^2\, dx}.
\eeq
Applying \eqnref{bounds} and \eqnref{bounds:by:u_1m} to the above relation, we arrive at the desired inequality \eqnref{bounds:stronger}.
\end{proof}

\smallskip

\noindent{\bf Proof of Theorem \ref{thm:sigma:decay}.}
By Lemma \ref{lemma:sigma:conv} and Theorem \ref{prop:computation:validation:eq}, we prove the theorem.
\qed

\smallskip

\noindent{\bf Proof of Theorem \ref{thm:eigenfunction}.}
As stated in the introduction, we set $\Om=B_2\setminus \overline{B_1^t}$, $\Gamma_D=\p B_1^t$ and $\Gamma_S= \p B_2$. We introduce the inner product 
\beq\label{norm;Hcal}
(u,\, v):=\int_\Om \nabla u\cdot\nabla v\quad\mbox{on }\big\{ w \in H^1(\Omega)\,:\, w=0 \textnormal{ on }\Gamma_D\big\}.
\eeq
From the zero Dirichlet condition for functions in $\{ w \in H^1(\Omega)\,:\, w=0 \textnormal{ on }\Gamma_D\}$, the resulting norm $\|\cdot\|$ is equivalent to the standard $H^1(\Om)$-norm. We denote by $\langle \cdot,\cdot\rangle$ the standard $L^2$-norm on $\Om$ and $\Gamma_S$.

Let $\sigma_1^t$, $\sigma_{1,m}^t$, $u_1^t$ and $u_{1,m}^t$ be given by the assumptions in Theorem  \ref{thm:eigenfunction}. 
Set $$w_m:=u_{1,m}^t-q_m u_{1}^t\quad\mbox{with }q_m=\left( u_{1,m}^t,\,  u_1^t\right).$$
We may assume that $w_m\neq 0$. 
It holds with $f_m^t$ given by  \eqnref{def:fmt} that
\beq\label{eq:w:bvp}
\left\{\begin{aligned}
\Delta w_{m}&=0\quad\mbox{in }\Om,\\
w_{m}&=0\quad\mbox{on }\Gamma_D,\\
\frac{\p w_{m}}{\p n}&=\sigma_1^t w_m +  (\sigma_{1,m}^t-\sigma_{1}^t) u_{1,m}^t + f_m^t\quad\mbox{on }\Gamma_S.
\end{aligned}\right.
\eeq

Note that $w_m$ is orthogonal to $u_1^t$ with respect to $(\cdot,\cdot)$ by the definition of $q_m$. Furthermore, using Green's identity, we derive
\begin{align}\label{cm:L2}
q_m=\int_\Om \nabla u_{1,m}^t\cdot \nabla u_1^t 
=\langle u_{1,m}^t,\, \sigma_1^t u_1^t\rangle_{L^2(\Gamma_S)}=\frac{\langle u_{1,m}^t,\,  u_1^t\rangle_{L^2(\Gamma_S)}}{\left\|u_1^t\right\|_{L^2(\Gamma_S)}^2};
\end{align}
the last equality holds by the relation
\beq\notag
\sigma_1^t=\frac{\left\|\nabla u_1^t\right\|_{L^2(\Om)}^2}{\left\|u_1^t\right\|_{L^2(\Gamma_S)}^2}=\frac{1}{\left\|u_1^t\right\|_{L^2(\Gamma_S)}^2}.
\eeq
By \eqnref{cm:L2},  $w_m$ is also orthogonal to $u_1^t$ with respect to $\langle\cdot,\cdot\rangle_{L^2(\Gamma_S)}$.
Therefore, $$w_m\in\mathcal{H}$$ with
$$\mathcal{H}:=\Big\{v\in H^1(\Om)\,:\, ( v,\, u_1^t)=0,\, \langle v,\, u_1^t\rangle_{L^2(\Gamma_S)}=0\mbox{ and }v=0 \textnormal{ on }  \Gamma_D\Big\},$$
which is a Hilbert space with the norm $(\cdot,\cdot)$ given by \eqnref{norm;Hcal}.

One can derive an upper bound for $\sigma_2^t$ by using  \eqnref{vari:second}. For any $u\in \mathcal{H}\backslash\{0\}$, we derive
that
$$\sigma_2^t\le\sup_{v\in \operatorname{Span}(u_1^t,u)\backslash\{0\}}\frac{\| \nabla v\|_{L^2(\Om)}^2}{ \|v\|_{L^2(\Gamma_S)}^2} =\sup_{(a,b)\in\mathbb{R}^2\backslash\{(0,0)\}}\frac{a^2\| \nabla u_1^t\|_{L^2(\Om)}^2+b^2\| \nabla u\|_{L^2(\Om)}^2}{ a^2\|u_1^t\|_{L^2(\Gamma_S)}^2 + b^2\|u\|_{L^2(\Gamma_S)}^2}=\frac{\| \nabla u\|_{L^2(\Om)}^2}{\|u\|_{L^2(\Gamma_S)}^2},$$
where the last equality follows from \eqref{variational characterization}. By applying \eqnref{first:simple}, we obtain
\beq\label{sigma2:bounds}
\sigma_1^t<\sigma_2^t\le\frac{\|  u\|^2 }{\|u\|_{L^2(\Gamma_S)}^2}\quad\mbox{for  }u\in\mathcal{H}\setminus\{0\}.
\eeq

By \eqref{eq:w:bvp}, we have
\beq\label{eq:varform:w}
\left( w_m,\, v\right) - \left\langle\sigma_{1}^t w_m,\, v\right\rangle_{L^2(\Gamma_S)} =\left\langle (\sigma_{1,m}^t - \sigma_1^t) u_{1,m}^t + f_m^t,\,v\right\rangle_{L^2(\Gamma_S)}\quad\mbox{for all }v\in \mathcal{H}.
\eeq
In fact, $w_m$ is the unique solution contained in $\mathcal{H}$ that satisfies the weak formulation \eqnref{eq:varform:w}. 
To show this, we consider a bilinear form $B:\mathcal{H}\times\mathcal{H}\rightarrow\RR$ and a linear functional $F:\mathcal{H}\rightarrow\RR$ given by
\begin{align*}
B(u,v) &= \left(u,\, v\right) - \sigma_{1}^t \left\langle u,\, v\right\rangle_{L^2(\Gamma_S)},\\[1mm]
F(u) &= \left\langle(\sigma_{1,m}^t - \sigma_1^t) u_{1,m}^t + f_m^t,\,u\right\rangle_{L^2(\Gamma_S)}\quad\mbox{for }u,v\in\mathcal{H}.
\end{align*}
From the zero Dirichlet condition on $\Gamma_D$ for a function in $\mathcal{H}$, for some constants $\beta>0$, we have
$$\left| B(u,v)\right|\leq \beta \|u\|\,\|v\|,\quad \left|F(u)\right|\leq \beta \|u\|\quad\mbox{for all }u,v\in\mathcal{H}. $$
Also using \eqnref{sigma2:bounds}, we obtain the coercivity of $B(\cdot,\cdot)$: for all $u\in \mathcal{H}$,
$$B(u,u) =\|u\|^2-\sigma_1^t\|u\|^2_{L^2(\Gamma_S)}= \frac{\sigma_1^t}{\sigma_2^t}\left(\|u\|^2 - \sigma_2^t\|u\|_{L^2(\Gamma_S)}^2\right) +\left(1-\frac{\sigma_1^t}{\sigma_2^t}\right)\|u\|^2\ge \left(1-\frac{\sigma_1^t}{\sigma_2^t}\right)\|u\|^2.$$
By the Lax--Milgram theorem, we conclude that $w_m$ is the unique solution of \eqref{eq:varform:w} in $\mathcal{H}$ and satisfies
\beq\label{eq:w:estm:L2} 
\|w_m\|\le C\left\|(\sigma_{1,m}^t-\sigma_{1}^t) u_{1,m}^t + f_m^t\right\|_{L^2(\Gamma_S)}
\eeq
for some constant $C>0$.

We recall from \eqnref{eq:sigma:truceigftn} and \eqnref{eq:sign:intuf} in the proof of Theorem \ref{prop:computation:validation:eq} that
$$\sigma_{1,m}^t = \frac{\|\nabla u_{1,m}^t\|_{L^2(\Om)}^2}{\| u_{1,m}^t\|_{L^2(\Gamma_S)}^2} + \left(\sigma_{1,m}^t + \frac{n\sinh\xi_2}{2\alpha}\right)^{-1}\frac{\|f_{m}^t\|_{L^2(\Gamma_S)}^2}{\| u_{1,m}^t\|_{L^2(\Gamma_S)}^2}.$$
Using Theorem \ref{prop:computation:validation:eq} on the right-hand side, we have
$$\sigma_{1,m}^t \ge \sigma_1^t + \left(\sigma_{1,m}^t + \frac{n\sinh\xi_2}{2\alpha}\right)^{-1}\frac{\|f_{m}^t\|_{L^2(\Gamma_S)}^2}{\| u_{1,m}^t\|_{L^2(\Gamma_S)}^2},$$
which gives
$$\|f_{m}^t\|_{L^2(\Gamma_S)}^2\le\left(\sigma_{1,m}^t + \frac{n\sinh\xi_2}{2\alpha}\right)\| u_{1,m}^t\|_{L^2(\Gamma_S)}^2\left(\sigma_{1,m}^t-\sigma_{1}^t\right).$$
Also, by the continuity of the trace operator $H^1(\Om)\to L^2(\p\Om)$, we have $\left\| u_{1,m}^t\right\|_{L^2(\Gamma_S)}\le C\left\| u_{1,m}^t\right\|_{H^1(\Om)}=C$.
Therefore, from \eqref{eq:w:estm:L2}, we arrive at
\begin{align} \label{eq:w:estm:final}
	\|w_m\|\le C\left(\sigma_{1,m}^t-\sigma_{1}^t\right).
 \end{align}
Note that the equality $u_{1,m}^t - u_1^t = w_m - \left(1-q_m\right)u_1^t$ and the assumption $\left(u_1^t,\, u_{1,m}^t\right)\geq 0$ yield
\begin{align*}
\left\| u_1^t-u_{1,m}^t \right\|^2
=  \|w_m\|^2 + \left| 1 - q_m \right|^2
=  \|w_m\|^2 +\left| 1 - \sqrt{1 - \|w_m\|^2} \right|^2 \leq \|w_m\|^2 + \|w_m\|^4. 
\end{align*}
Using \eqref{eq:w:estm:final} and Theorem \ref{thm:sigma:decay}, we complete the proof.
\qed

\section{Numerical experiments}\label{sec:numerical}
In this section we propose a numerical scheme based on Theorem \ref{prop:computation:validation:eq} and Theorem \ref{thm:sigma:decay} to compute the first eigenvalue $\sigma_1^t$ on eccentric spherical shells in general dimensions $\RR^{n+2}$. We then perform various numerical experiments to understand the geometric dependance of $\sigma_1^t$ on $t$. We also show the second and third smallest eigenvalues amongst the eigenvalues whose eigenfunctions depend only on $\theta$ and $\xi$, that is, the functions of the form \eqnref{u_1^t seriesexp}.

\subsection{Description of the computation scheme for $\sigma_1^t$}\label{subsection:scheme}
As described in previous sections, we denote by $\sigma_1^t$ the first Steklov--Dirichlet eigenvalue for the spherical shell $\Om=B_2\setminus\overline{B_1^t}$ in $\RR^{n+2}$ where $r_2$ is the radius of $B_2$, $r_1$ the radius of $B_1^t$, and $t$ the distance between the centers of the inner and outer balls. 
For given $t$, $r_1$, and $r_2$, we compute $\sigma_1^t$ by the following two steps.
\begin{itemize}
\item {\textbf{Step 1.}} We obtain $\lim_{N\to\infty}\sigma_{1,N}^t$ by evaluating the first eigenvalue $\sigma_{1,N}^t$ of the finite section matrix $\mathbb{L}_N$ with a sufficiently large truncation size $N$ (see \eqnref{def:M}). In particular, we iteratively compute $\sigma_{1,N}^t$ with $N=2^k$ by increasing $k$ 
until the stopping criterion is met:
\beq\label{relative:stop}
\eta_k :=\left|\frac{ \sigma_{1, 2^{k-1}}^t-\sigma_{1,2^{k}}^t}{\sigma_{1,2^{k}}^t}\right| <10^{-12}.
\eeq
For all the numerical examples in subsection \ref{subsec:examples}, this stopping condition is satisfied at $N=2^k$ for some $k\leq 9$. 
	
	\item {\textbf{Step 2.}} Let $N=2^k$ be attained in Step 1 to satisfy \eqnref{relative:stop}. Now,  in view of \eqnref{bounds}, we validate that $\sigma_{1,N}^t$ closely approximates $\sigma_1^t$ by evaluating 
	\beq\label{eqn:EmN}
E_{m,N}:=\left|\sigma_{1,N}^t-\frac{\ds\int_{\Om}\left|\nabla u_{1,m}^t\right|^2\, dx}{\ds \int_{\partial \Om}\left|u_{1,m}^t\right|^2\,dS}  \right|. 
\eeq 
For all the examples in subsection \ref{subsec:examples}, $E_{m,N}$ decreases in $m$ and eventually satisfies 
\beq\label{E_mN:condition}
E_{m,N}<10^{-12}.
\eeq

\end{itemize}

\smallskip

Table \ref{table:error} shows the relative errors $\eta_k$ for three- and four-dimensional spherical shells (that is, in $\RR^{n+2}$, $n=1,2$). A larger $k$ is required for the truncated matrix $\mathbb{L}_{2^k}$ to meet the stopping criterion \eqnref{relative:stop} as the two boundaries of $\p B_1^t$ and $\p B_2$ are closer to each other (i.e., $t$ increases). The relative errors $\eta_k$ are greater in four dimensions than in three dimensions.

Figure \ref{fig:plot:error} is the log-scale graph of $E_{m,N}$ against $m$ for a three-dimensional spherical shell.
The value of $E_{m,N}$ exponentially decreases and shows a plateau at a value less than $10^{-15}$, meeting the criterion \eqnref{E_mN:condition}.

We affirm that computing 
\beq\label{upper_bound}
\Big({\ds \int_{\partial \Om}\left|u_{1,m}^t\right|^2\,dS}\Big)^{-1}{\ds\int_{\Om}\left|\nabla u_{1,m}^t\right|^2\, dx}
\eeq
in \eqnref{eqn:EmN} can be transformed to one-dimensional integrals. Observing that $u_{1,m}^t$ depends only on $\xi$ and $\theta$ as in \eqnref{def:u1N}, we can reduce \eqnref{upper_bound} into the ratio of a two-dimensional integral to a one-dimensional integral. In particular, they can be expressed as summations of simpler integrals of the form
\beq\notag
	\int_0^\pi \frac{\sin^n \theta \cos((m-2k)\theta)}{(\cosh \xi_2 -\cos\theta)^k} d\theta\quad\mbox{with }k=1\mbox{ or }2
\eeq
by using the Jacobian formula in \eqref{Jacobian} and the expansion of $G_m^{(\lambda)}(\cos\theta)$ in \eqnref{Gegen:cos}. 

All the numeric computations here are performed by MATLAB. To produce high precision, $\sigma_{1,2^k}$, \eqref{upper_bound}, and $E_{m,N}$ are symbolically computed.

\begin{table}[h!]
	\centering
	\begin{minipage}[t]{0.45\textwidth}
		\strut\vspace*{-\baselineskip}\newline
		\null\hfill
		\begin{tabular}{c c c c }
			\hline
			$n$ &$\frac{t}{r_2-r_1}$ & $k$ & $\eta_k$ \\ \hline\hline
			1 & 0.2 & 4  & 3.31462E-11\\ 
			& & 5 & 9.03987E-24 \\ 
			
			& 0.4 & 4 & 1.54664E-06\\ 
			& & 5  & 3.02385E-14 \\

			& 0.6 & 5  & 1.16885E-08\\ 
			& & 6  & 8.26812E-19 \\ 

			& 0.8 & 6  & 3.78168E-10\\ 
			& & 7  & 5.78756E-22 \\ 

			& 0.98 & 8  & 8.13368E-11\\ 
			& & 9  & 2.03221E-23 \\ 
			\hline
		\end{tabular} \quad
	\end{minipage}\hfill
		\begin{minipage}[t]{0.45\textwidth}
		\strut\vspace*{-\baselineskip}\newline
		\begin{tabular}{c c c c }
			\hline
			$n$ &$\frac{t}{r_2-r_1}$ & $k$ & $\eta_k$ \\ \hline\hline
			2 & 0.2 & 4  & 8.734469-11\\ 
			& & 5 & 3.20532E-23 \\ 
			
			& 0.4 & 4 & 4.99850E-06\\ 
			& & 5  & 1.37935E-13 \\
			
			& 0.6 & 5  & 6.48547E-08\\ 
			& & 6  & 7.01700E-18 \\ 
			
			& 0.8 & 6  & 3.45133E-09\\ 
			& & 7  & 8.56115E-21 \\ 
			
			& 0.98 & 8  & 1.17712E-09\\ 
			& & 9  & 4.98230E-22 \\ 
			\hline \vspace{-0.5em}
		\end{tabular}
	\end{minipage}
	\caption{\label{table:error}
		Relative errors $\eta_k$ of $\sigma_{1,2^k}^t$ for some spherical shells in $\RR^{n+2}$ with $n=1$ (left) and $n=2$ (right) for $r_1=1$ and $r_2=3$, where $\eta_k$ is given by \eqnref{relative:stop}. 
		For all examples in the two tables, the stopping criterion \eqnref{relative:stop} is satisfied at some $k\leq9$.
}
\end{table}

\begin{figure}[h!]
	\centering
	\includegraphics[width=0.5\textwidth]{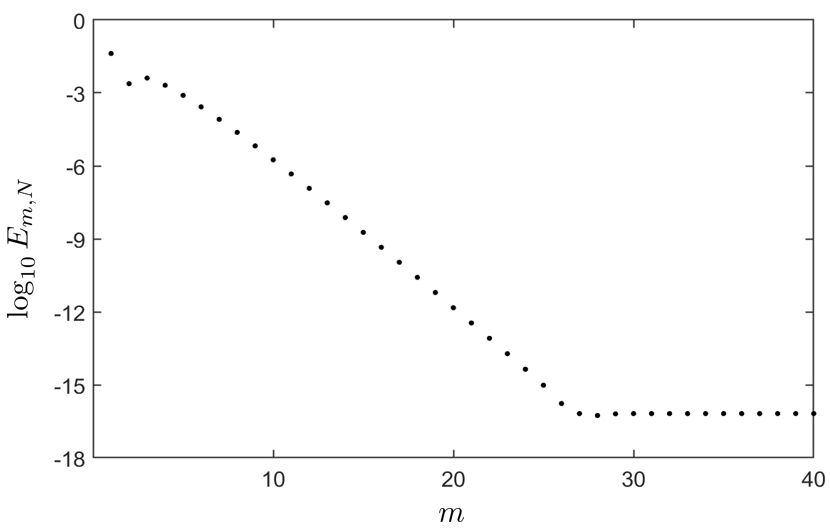}
	\caption{\label{fig:plot:error}Log-scale graph of $E_{m,N}$ (see \eqnref{eqn:EmN}) against $m$ for the spherical shell in three dimensions (i.e., $n=1$) with $r_1=1$, $r_2=3$, $t=1.2$, and $N = 2^7$.
}
\end{figure}

\subsection{Examples}\label{subsec:examples}
We show the numerical computations of $\sigma_1^t$ for spherical shells $\Om=B_2\setminus\overline{B_1^t}$ in $\RR^{n+2}$ with various values of $n$, $r_1$, $r_2$, and $t$. Here, $\sigma_1^t$ with $t>0$ is acquired by computing $\sigma_{1,N}^t$ by following the numerical computation scheme described in subsection \ref{subsection:scheme}. For the instance of $t=0$ (the concentric case), we use the exact value
given in \eqnref{sigma1:0}.

\begin{example} \rm
We consider spherical spheres in three dimensions (i.e., $n=1$) with $r_1=1$, $r_2=3$ and $\frac{t}{r_2-r_1}=0,\,0.02,\dots,\,0.98$ ($50$ cases). 
Figure \ref{fig:3d:numerics} shows the graph of $\sigma_1^t$ against $t$. 
Note that $\sigma_1^t$ monotonically decreases in $t$, which is in accordance with the simulation results in \cite{Hong:2022:FSD}.

\end{example}

\begin{figure}[h!]
	\centering
	
	\subfloat{
		\includegraphics[width=.52\textwidth]{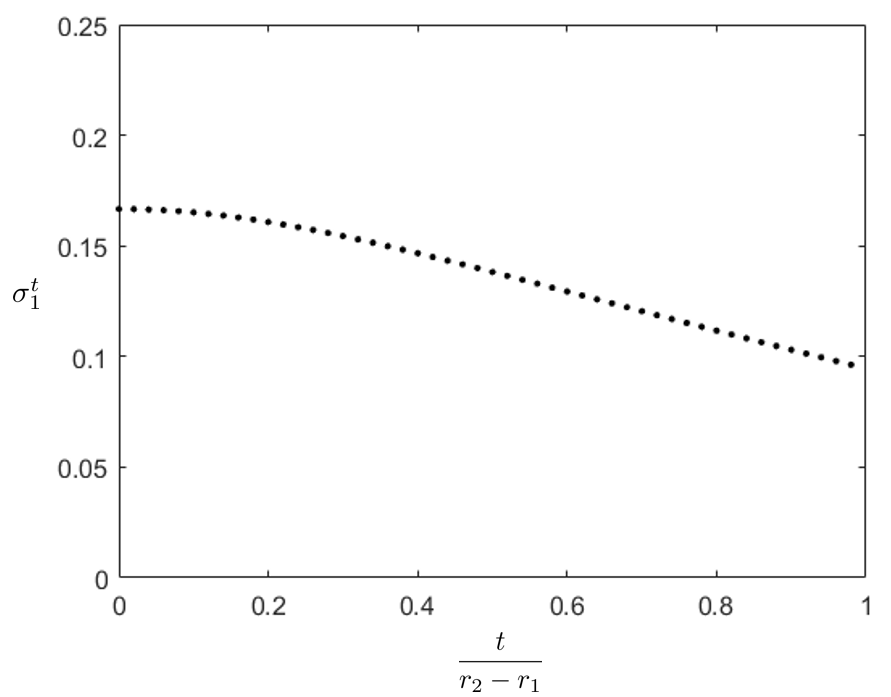}
	}\quad
	\subfloat{
		\begin{tikzpicture}[scale=0.7]
		\coordinate  (C) at (-1.5, 0);
		\coordinate (X) at (0, 0);
		\fill[gray!40,even odd rule] (X) circle (1) (C) circle (3);

		\draw[x=0.25cm,y=0.60cm, line width=.2ex, -stealth] (7.6,0.5) arc (150:-150:1 and 1);
		
		\draw (-1.5,0) circle (3);
		\draw (0, 0) circle (1);
		\fill (-1.5, 0) circle (0.07);
		\fill (0, 0) circle (0.07);

		\draw[dashed] (-5.7, 0) -- (2.7, 0);
		\draw[draw=none] (0, -5) -- (0, -2);
		
		\draw (0, -0.2) -- (-1.5, -0.2);
		\fill (0, -0.2) -- (-0.3, -0.3) -- (-0.3, -0.1);
		\fill (-1.5, -0.2) -- (-1.2, -0.3) -- (-1.2, -0.1);
		
		\draw (-0.7, 0.2) node {$t$};
		
		\draw (-1.5, 3.35) node {$B_2$};
		\draw (0, 1.35) node {$B_1^t$};
		\draw[draw=none] (0,0) -- (0,-4.5);
		\end{tikzpicture}
	}
	\caption{\label{fig:3d:numerics} 
		The first Steklov--Dirichlet eigenvalue for the three-dimensional spherical shell $B_2\backslash \overline{B_1^t}\subset\mathbb{R}^3$ with $r_1=1,\,r_2=3$, and 
		$\frac{t}{r_2-r_1}=0,\,0.02,\dots,\,0.98$ ($50$ cases). Every case except $t=0$ is numerically computed with the stopping criterion \eqnref{relative:stop}; at $t=0$, we mark the exact eigenvalue $\sigma_1^0=\frac{r_1}{r_2(r_2-r_1)}$. 
	}
\end{figure}

\begin{example}[$\sigma_1^t$ depending on $r_1$ and $t$]\rm  
Figure \ref{fig:plots:r1} plots $\sigma_1^t$ of the spherical spheres in three and four dimensions (i.e., $n=1,2$) for various $r_1$ and $t$ where $r_2=1$. More precisely, $r_1=0.2,\, 0.4,\, 0.6,\,0.8$ and $\frac{t}{r_2-r_1}=0,\,0.02,\, 0.04,\, \dots,\, 0.98$ (50 cases). Observe that larger $r_1$ tends to yield larger $\sigma_1^t$ for both three and four dimensions. 
\end{example}

\begin{example}[$\sigma_1^t$ depending on $n$ and $t$] \rm 
	Figure \ref{fig:plots:dim} plots $\sigma_1^t$ in $\RR^{n+2}$ with $n = 1,2, \cdots, 6$ and $\frac{t}{r_2-r_1}=0,\,0.02,\, 0.04,\, \dots,\, 0.98$ (50 cases) where $r_1=0.4, 0.6$ and $r_2=1$. Higher dimensions tend to yield smaller $\sigma_1^t$ and, also, smaller variance in $\sigma_1^t$ with respect to $t$. 
\end{example}

\begin{example}[Second and third eigenvalues with eigenfunctions of the form \eqnref{u_1^t seriesexp}] \rm \label{ex:order}
In this example, we consider the second and third smallest eigenvalues whose eigenfunctions depend only on $\xi$ and $\theta$, that is, of the form \eqnref{u_1^t seriesexp}. By abusing the notation, we denote these eigenvalues by $\sigma_{2}^t$ and $\sigma_{3}^t$, respectively. We obtain $\sigma_{2}^t$ and $\sigma_{3}^t$ by computing the second and third smallest eigenvalues of the finite section matrix $\mathbb{L}_N$ in  \eqnref{def:M} with a sufficiently large truncation size $N$.
To illustrate the geometric dependence of $\sigma_{2}^t$ and $\sigma_{3}^t$, in Figure \ref{fig:plots:ord}, we plot them for three dimensions with various $r_1=0.2,\, 0.4,\, 0.6,\,0.8$ and $\frac{t}{r_2-r_1}=0,\,0.02,\, 0.04,\, \dots,\, 0.98$ (50 cases) where $r_2$ is fixed to be $1$. All of these eigenvalues are computed on $\mathbb{L}_{2^9}$.  Unlike the monotonic decrease of $\sigma_1^t$ in $t$ for all $r_1$, such a behavior does not appear for $\sigma_2^t$ and $\sigma_3^t$ in Figure \ref{fig:plots:ord}.
\end{example}

\begin{figure}[p!]
	\centering
	\includegraphics[width=\textwidth]{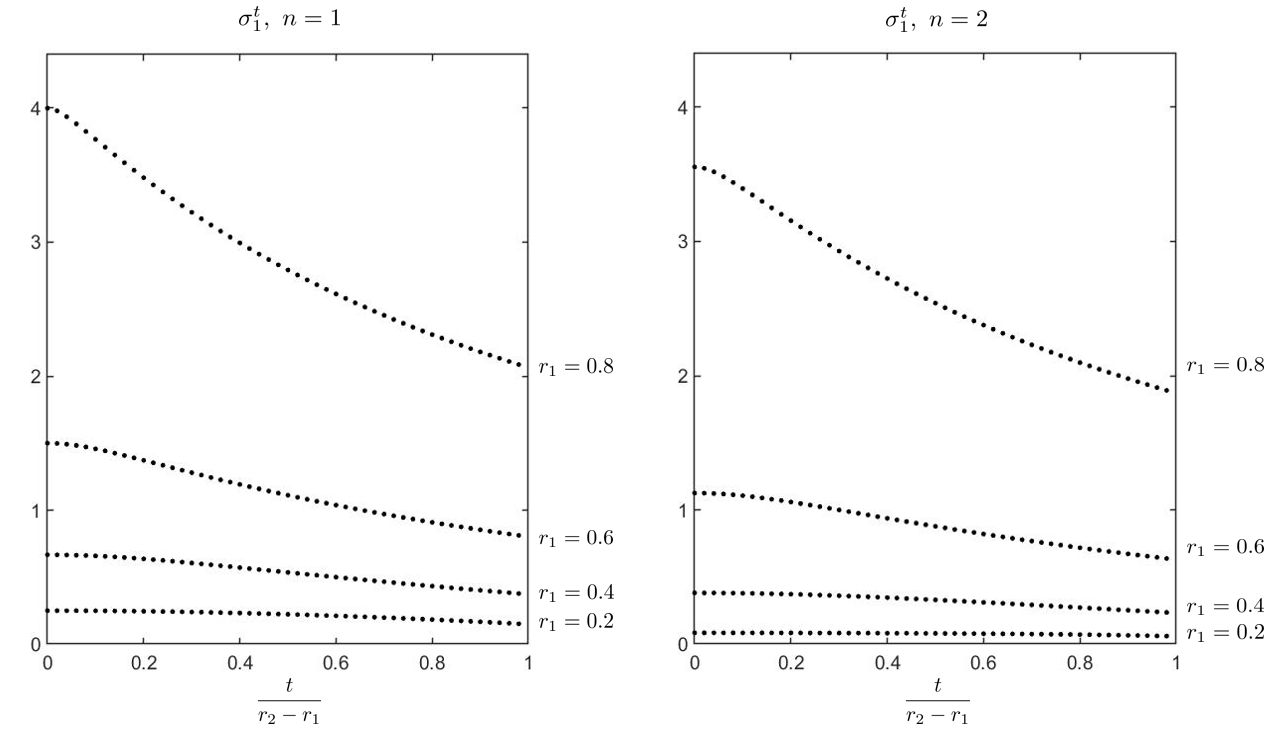}
	\caption{\label{fig:plots:r1} 
		Numerical values of $\sigma_1^t$ for various values of $r_1$ and $t$ in $\RR^3$ (left, $n=1$) and $\RR^4$ (right, $n=2$), where $r_2$ is fixed to be $1$. 
	}
\end{figure}
\begin{figure}[p!]
	\centering
	\includegraphics[width=\textwidth]{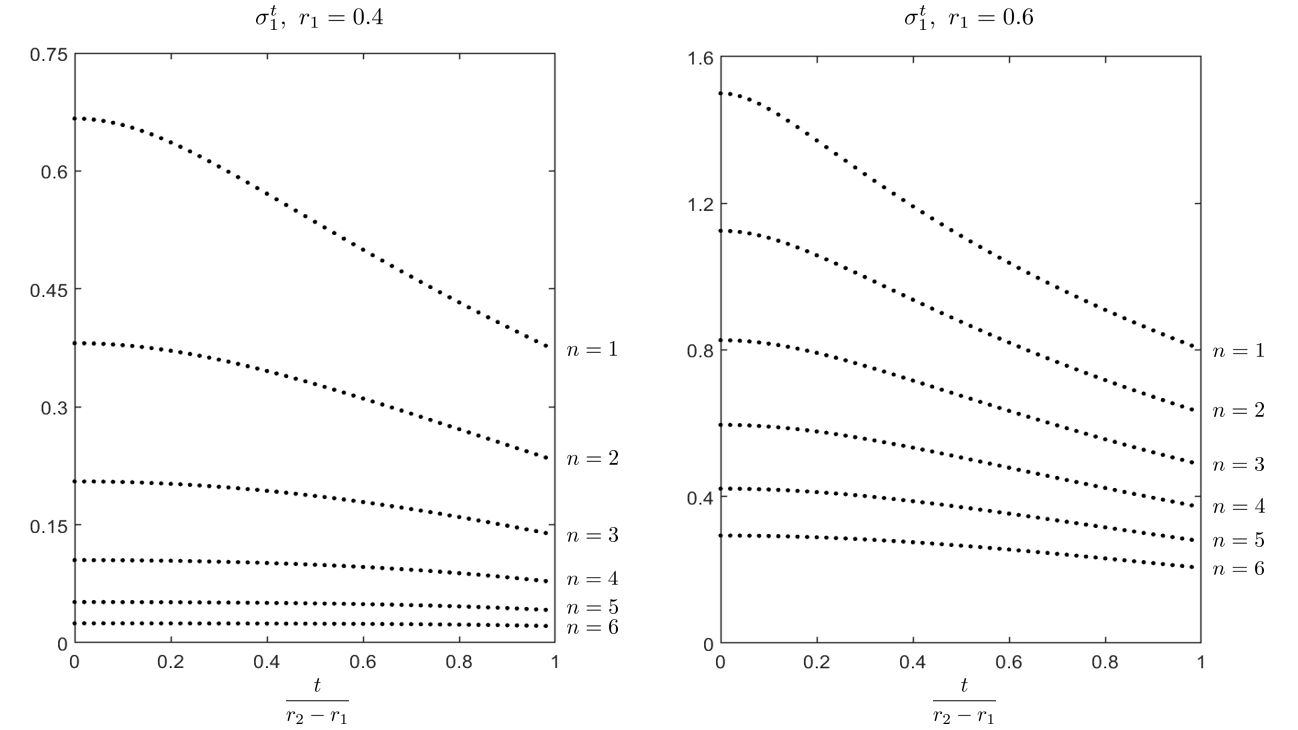}
	\caption{\label{fig:plots:dim}
		Numerical values of $\sigma_1^t$ for various dimensions and $t$ with $r_1= 0.4$ (left) and $r_1=0.6$, where $r_2$ is fixed to be $1$.
	}
\end{figure}
\begin{figure}[h!]
	\centering
	\includegraphics[width=\textwidth]{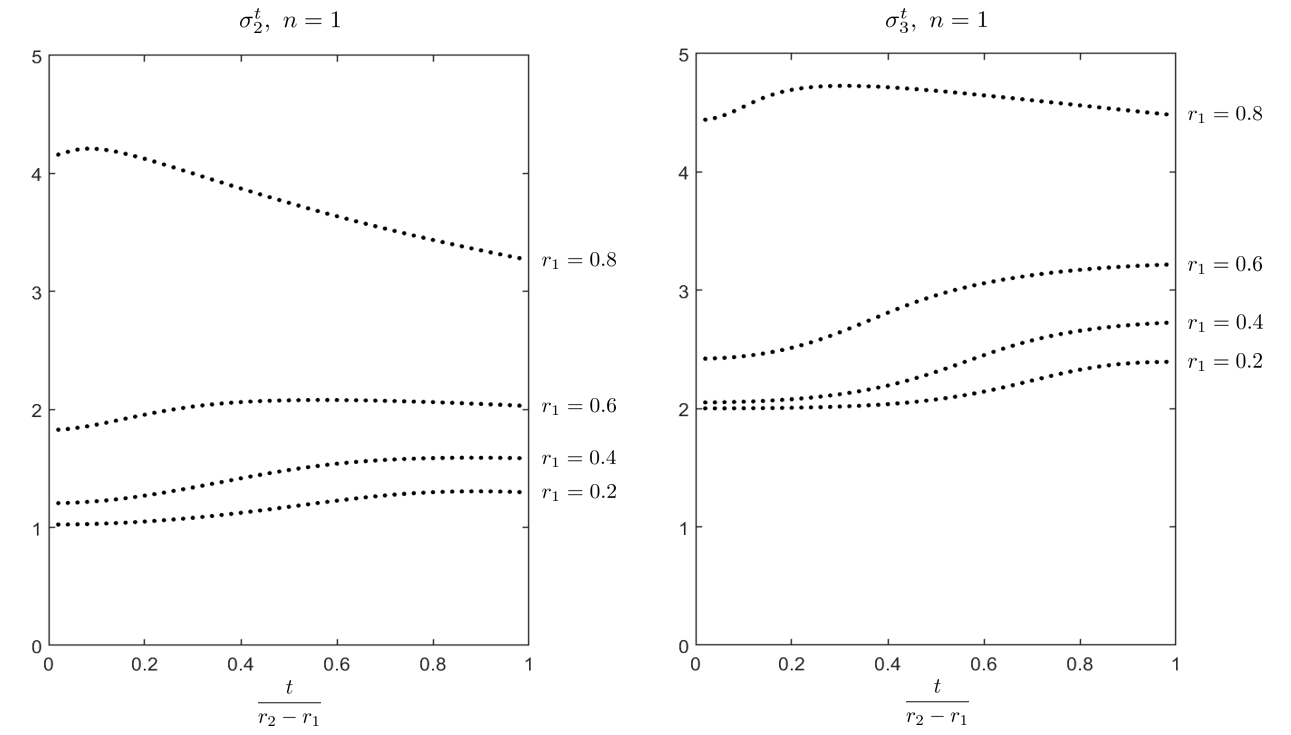}
	\caption{\label{fig:plots:ord}
Second eigenvalue ($\sigma_2^t$, the left figure) and third eigenvalue ($\sigma_3^t$, the left figure) whose eigenfunctions are of the form \eqnref{u_1^t seriesexp} for the spherical shell in $\RR^3$, where
$r_1$, $t$ are various and $r_2$ is fixed to be $1$. We omit the values at $t=0$.
	Unlike $\sigma_1^t$, they are not monotonically decreasing in $t$.
	}
\end{figure}

\section{Conclusion}\label{sec:conclusion}
We proposed a finite section method to approximate the first Steklov--Dirichlet eigenvalue on eccentric spherical shells in $\RR^{n+2}$ with $n\geq 1$, based on the Fourier--Gegenbauer series expansion for the first eigenfunction. We verified the exponential convergence of the proposed approximation method and developed a numerical computation scheme to compute the first eigenvalue. This scheme is efficient in that it involves only the symmetric tridiagonal matrices, without mesh generation. 
We performed numerical computations for spherical shells of various configurations and verified the reliability of our method. The numerical examples show the monotonicity of the first eigenvalue depending on the distance between the two boundary spheres of the shell, regardless of the dimensions and radii of the spherical shells (see Figures \ref{fig:plots:r1} and \ref{fig:plots:dim}). The examples also show that the first eigenvalue decreases as $n$ increases and as the inner radius decreases. It will be of interest to prove such geometric behaviors of the first Steklov--Dirichlet eigenvalue. 
 
\section*{Acknowledgement}
This work was supported by a National Research Foundation of Korea (NRF) grant funded by the Korean government (MSIT) (RS-2024-00359109).
The authors thank Dong-Hwi Seo for valuable discussions.


\bibliographystyle{amsplain} 

\providecommand{\bysame}{\leavevmode\hbox to3em{\hrulefill}\thinspace}
\providecommand{\MR}{\relax\ifhmode\unskip\space\fi MR }
\providecommand{\MRhref}[2]{
	\href{http://www.ams.org/mathscinet-getitem?mr=#1}{#2}
}
\providecommand{\href}[2]{#2}

\end{document}